\def\Xint#1{\mathchoice
{\XXint\displaystyle\textstyle{#1}}%
{\XXint\textstyle\scriptstyle{#1}}%
{\XXint\scriptstyle\scriptscriptstyle{#1}}%
{\XXint\scriptscriptstyle\scriptscriptstyle{#1}}%
\!\int}
\def\XXint#1#2#3{{\setbox0=\hbox{$#1{#2#3}{\int}$}
\vcenter{\hbox{$#2#3$}}\kern-.5\wd0}}
\def\dashint{\Xint-}
\theoremstyle{plain} %text of this environment is typesetted in italics
\newtheorem{thm}{Theorem}[section]
\newtheorem{lem}[thm]{Lemma}
\newtheorem{coro}[thm]{Corollary}
\newtheorem{prop}[thm]{Proposition}
\theoremstyle{definition} %text of this environment is typesetted in roman letters
\newtheorem{dfn}[thm]{Definition}
\newtheorem{rem}[thm]{Remark}
\def\tr{\mathop{\operator@font tr}\nolimits}  
\def\dist{\mathop{\operator@font dist}\nolimits}  
\def\div{\mathop{\operator@font div}\nolimits}  
\def\exp{\mathop{\operator@font exp}\nolimits}  
\def\essinf{\mathop{\operator@font {\itshape ess}.\inf}\nolimits}  
\def\esssup{\mathop{\operator@font {\itshape ess.}\sup}\nolimits}  
\newcommand{\R}{\mathbb{R}}
\newcommand{\M}{{\mathcal M}}
\newcommand{\A}{{\mathcal A}}
\def\a{\alpha}
\def\phi{\varphi}
\def\e{\varepsilon}
\def\ol{\overline}
\newcommand{\Tr}{\mathop{\mathrm{Tr}}\nolimits} 
\begin{document}

\title[$W^{\sigma,p}$ A Priori Estimates]{$ W^{\sigma,p}$ A Priori Estimates for Fully Nonlinear Integro-Differential Equations}

\author[S. KITANO]{
Shuhei Kitano
}

\subjclass[2010]{
35R09; 47G20.
}
\keywords{
nonlocal equations, viscosity solution. 
}

\address{
Department of Applied Physics\endgraf
Waseda University\endgraf
Tokyo, 169-8555\endgraf
JAPAN
}
\email{sk.koryo@moegi.waseda.jp}

%\thispagestyle{empty}
%\setcounter{page}{0}
%\pagenumbering{roman}
\maketitle
%\tableofcontents
%\newpage
%\pagenumbering{arabic}

\begin{abstract}
$W^{\sigma,p}$ estimates are
studied for a class of
fully nonlinear integro-differential equations of order $\sigma$, which are analogues of $W^{2,p}$ estimates by Caffarelli. We also present Aleksandrov-Bakelman-Pucci maximum principles, which are improvements of estimates proved by Guillen-Schwab, depending only on $L^p$ norms of inhomogeneous terms.
\end{abstract}

\section{Introduction}%%%%%%%%%%%%%%%%%%%%%%SECTION#1
\label{sec:intro}

In this paper, we study the fully nonlinear nonlocal equation of the form:
\begin{equation}\label{eq1}
Iu(x):=\inf_{\beta\in\mathscr{B}}\frac{\A(n,-\sigma)}{2}\int_{\R^n}\delta(u,x,y)\frac{y^T A_\beta(x) y}{|y|^{n+2+\sigma}}dy=f(x)\quad\mbox{in }B_3,
\end{equation}
where $n\geq2$, $\sigma\in(0,2)$, $\mathscr{B}$ is an index set, $\A(n,-\sigma)$ is the normalizing constant, defined by \eqref{-A} in Section \ref{S2}, and we set
\[
\delta(u,x,y):=u(x+y)+u(x-y)-2u(x).
\]
Here, $B_3$ is the open ball with radius $3$ and center $0$ and for each $\beta\in\mathscr{B},x\in\R^n$, $A_\beta(x)$ is the the $n\times n$ real symmetric matrix.
We assume the ellipticity condition by following \cite{GS12}: there exist $\lambda,\Lambda>0$ such that  $A_\beta(x)\geq O$ and
\[
\lambda Id\leq \frac{1}{n+\sigma}\{\sigma A_\beta(x)+\Tr(A_\beta(x))Id\}\leq\Lambda Id\quad\mbox{for }\beta\in\mathscr{B},x\in B_3.
\]

The main purpose of this paper is to present the $W^{\sigma,p}$ estimate for the nonlocal equation \eqref{eq1}, which is an analogue of the $W^{2,p}$ estimate for fully nonlinear second order equations.
We also show the Aleksandrov-Bakelman-Pucci (ABP for short) maximum principle, which provides the bound of supersolutions of a nonlocal equation by the $L^p$ norm of the right hand side.

On the other hand, in~\cite{Caf}, Caffarelli establish several interior a priori estimates for fully nonlinear second order equations.
These are generalizations of classical results for linear second order equations and proved by a new approach based on the ABP maximum principle.
The $W^{2,p}$ estimate is one of main results in~\cite{Caf}, where the case of $p>n$ is treated.
Escauriaza~\cite{Esc} extends $W^{2,p}$~estimates to the range $p>p^*$, where $p^*\in[n/2,n)$ depends only on ellipticity constants.

Initiated by researches from stochastic analysis such as \cite{BK05,BL02}, Silvestre \cite{Sil06} started H\"older estimates for integro-differential equations.
Afterwards, Caffarelli-Silvestre investigated the regularity theory of viscosity solutions for full nonlinear nonlocal equations in~\cite{CS09,CS11a,CS11b}.
In \cite{CS09}, they introduce a class of elliptic integro-differential equations, which includes the following equation:
\begin{equation}\label{eq2}
\inf_{\beta}\sup_{\alpha}\int_{\R^n}\delta(u,x,y)K_{\alpha,\beta}(x,y)dy=f(x),
\end{equation}
where $K_{\alpha,\beta}$ satisfies the ellipticity condition:
\[
\frac{(2-\sigma)\lambda}{|y|^{n+\sigma}}\leq K_{\alpha,\beta}(x,y)\leq\frac{(2-\sigma)\Lambda}{|y|^{n+\sigma}}.
\]
Then, Harnack inequalities and H\"older estimates are obtained by applying nonlocal versions of the ABP maximum principle,
and $C^{1,\alpha}$~estimates are also proved for some restrictive classes of translation-invariant equations.
Note that constants obtained in these results do not blow up as the order $\sigma$ goes to 2.
%There are two papers~\cite{CS11a,CS11b} regarded as continuation of~\cite{CS09} by Caffarelli-Silvestre.
In~\cite{CS11b}, they discussed variable coefficient elliptic equations and established Cordes-Nirenberg type estimates.
An analogue of the Evans-Krylov theorem is provided for a class of translation-invariant convex nonlocal equations in~\cite{CS11a}.
However, some smoothness of the kernel $K_{\alpha,\beta}$ are required for $C^{1,\alpha}$ estimates and the nonlocal Evans-Krylov theorem in~\cite{CS09,CS11a,CS11b}. 
In the case where we do not assume any regularities of the kernel with respect to $y$, Kriventsov~\cite{Kriv} improves $C^{1,\alpha}$~estimates, and Serra~\cite{Serra} establishes a nonlocal Evans-Krylov theorem and $C^{\sigma+\alpha}$~Schauder type estimates.

A natural question is whether $W^{\sigma,p}$~estimates, which are analogous to $W^{2,p}$~estimates by~\cite{Caf,Esc}, are valid for the nonlocal equation \eqref{eq2}.
However, this question is still largely open and one of difficulties comes from that nonlocal ABP maximum principles in~\cite{CS09}, strongly depend on $L^\infty$ norms of inhomogeneous terms $f$.
Another versions of ABP maximum principles are established by Guillen-Schwab~\cite{GS12}.
They study supersolutions of the fractional Pucci equation:
\[
\left\{
\begin{aligned}
\sup_{\lambda\leq\Tr(A)\mbox{ and }A\leq \Lambda Id}(2-\sigma)\int_{\R^n}\delta(u,x,y)\frac{y^TA y}{|y|^{n+2+\sigma}}dy&=f(x)&\quad&\mbox{in }B_1,\\
u(x)&=0&\quad&\mbox{in }\R^n\setminus B_1.
\end{aligned}
\right.
\]
This equation is more restrictive in than the one treated in~\cite{CS09}, however the ABP maximum principle in \cite{GS12} is represented as follow:
\[
-\inf_{B_1} u\leq C\|f\|_\infty^{(2-\sigma)/2}\|f\|_n^{\sigma/2},
\]
and plays a fundamental role of the proof of $W^{\sigma,\epsilon}$~estimates by Yu~\cite{Y}, which are inspired by $W^{2,\e}$-estimates of  \cite{E,L}.
Recently, the author proves ABP maximum principles depending only on $\|f\|_n$, where $\sigma$ is sufficiently close to $2$ in~\cite{K}.

This paper is organized as follows.
In Section \ref{S2} we recall several preliminary results of viscosity solutions of second order equations and nonlocal equations.
In Section \ref{S3}, we will show ABP maximum principles depending only on $L^p$ norms of inhomogeneous terms.
In Section \ref{S4}, we derive $W^{\sigma,p}$~estimates.
Finally, in Section \ref{S5}, we construct some functions, to show ABP~type estimates do not hold for some $p>n/\sigma$.
In the Appendix, we prove basic properties of inf-convolutions of viscosity supersolutions for nonlocal equations.

%%%%%%%%%%%%%%%%%%%%%%%%%%%%%%%%%%%%%%%%%%%%%%%%%%%%%%%%%%%%%%%%%%%%%%%%%%%%%%%%%%%%%%%%%%%%%%%%%%%%%%%%%%%%%%%%%%%%%%%%%%%%%%%%%%%%%%%%%%%%%%%%%%%%%%%%%%%%%%%%%%%%%%%%%%%%%%%%%%%%%%%%%%%%%%%%%%%%%%%%%%%%%%%%%%%%%%%%%%
% section Preliminaries
%%%%%%%%%%%%%%%%%%%%%%%%%%%%%%%%%%%%%%%%%%%%%%%%%%%%%%%%%%%%%%%%%%%%%%%%%%%%%%%%%%%%%%%%%%%%%%%%%%%%%%%%%%%%%%%%%%%%%%%%%%%%%%%%%%%%%%%%%%%%%%%%%%%%%%%%%%%%%%%%%%%%%%%%%%%%%%%%%%%%%%%%%%%%%%%%%%%%%%%%%%%%%%%%%%%%%%%%%%

\section{Preliminaries}\label{S2}

Throughout this paper, we let   $|\cdot|$ be the Euclidean norm. 
We set
\[
B_r(x):=\{y\in\R^n:|x-y|<r\}
\]
and $B_r:=B_r(0)$. 
%For a measurable subset $A$ of $\R^n$, $|A|$ is its Lebesgue measure and $\chi_A$ is its indicator function. We denote by $S^{n-1}$ the $n-1$ dimensional unit sphere.
We write $u^+:=\max\{u,0\}$ and $u^-:=\max\{-u,0\}$.
For $\Omega\subset\R^n$ open, we denote as $C_0(\Omega)$ the space of continuous compactly supported functions in $\Omega$
and for given $\gamma>0$, also denote as $C^{\gamma}(\Omega)$ the space $C^{k,\gamma-k}(\Omega)$, where $k\in \mathbb{Z}$ is the floor of $\gamma$.

We often use the following normalizing constants: for $n\geq2,\sigma\in(0,2)$,
\begin{align}\label{A}
\A(n,2-\sigma)&:=\frac{\Gamma((n+\sigma-2)/2)}{\pi^{n/2}2^{2-\sigma}\Gamma((2-\sigma)/2)}\quad\mbox{and}\\
\A(n,-\sigma)&:=\frac{2^\sigma\Gamma((n+\sigma)/2)}{\pi^{n/2}|\Gamma(-\sigma/2)|},\label{-A}
\end{align}
where $\Gamma(-s)=\Gamma(1-s)/(-s)$ is the Gamma function evaluated at $-s$.
Note that $\A(n,-\sigma)/(\sigma(2-\sigma))$ remains between two positive constants for $\sigma\in(0,2)$.

Let us define the $\sigma$ order fractional Hessian $D^\sigma u$ by
\[
D^\sigma u(x):=\frac{\A(n,-\sigma)}{2}\int_{\R^n}\delta(u,x,y)\frac{y\otimes y}{|y|^{n+2+\sigma}}dy,
\]
where $y\otimes y$ is the real symmetric $n\times n$ matrix defined by $(y\otimes y)_{i,j}:=y_iy_j$ for $1\leq i,j\leq n$.
%where $(D^\sigma u(x))_{i,j}$ is the $(i,j)$-element of $D^\sigma u$ and $y_j$ is the $i$-th component of $y$.
Then, our nonlocal operator $I$ in \eqref{eq1} can be written by
\begin{equation}\label{operator}
Iu(x)=\inf_{\beta\in\mathscr{B}}\Tr (A_\beta(x)D^\sigma u(x)).
\end{equation}
Let $S_{\lambda,\Lambda}$ be the set of real symmetric $n\times n$ matrices $A$ such that
\[
\lambda Id\leq A_\sigma\leq\Lambda Id\quad \mbox{and}\quad A\geq O,
\]
where
\begin{equation}\label{As}
A_\sigma:=\frac{1}{n+\sigma}\{\sigma A+\Tr(A)Id\}.
\end{equation}
The fractional Pucci maximal and minimal operator is defined by
\begin{equation*}
\M^+u(x):=\sup_{A\in S_{\lambda,\Lambda}}\Tr(AD^\sigma u(x))=\sup_{A\in S_{\lambda,\Lambda}}\frac{\A(n,-\sigma)}{2}\int_{\R^n}\delta(u,x,y)\frac{y^TAy}{|y|^{n+2+\sigma}}dy
\end{equation*}
and
\begin{equation*}
\M^-u(x):=\inf_{A\in S_{\lambda,\Lambda}}\Tr(AD^\sigma u(x))=\inf_{A\in S_{\lambda,\Lambda}}\frac{\A(n,-\sigma)}{2}\int_{\R^n}\delta(u,x,y)\frac{y^TAy}{|y|^{n+2+\sigma}}dy.
\end{equation*}
Typical properties of $I$ are concavity and that
\[
\M^-(u-v)(x)\leq Iu(x)-Iv(x)\leq\M^+(u-v)(x)
\]
for bounded smooth functions $u$ and $v$. 
%We say $Q\subset Q_1$ is a dyadic cube if there exists $m\in \N$ such that $Q$ is obtained by dividing $Q_1$ to $2^{nm}$ cubes and $\tilde{Q}$ is the predecessor of $Q$ if $Q$ is one of $2^n$ cubes obtained from dividing $\tilde{Q}$. We recall a lemma of the Caldel\'on-Zygmund cube decomposition (see Lemma 4.2 in \cite{CafCab}).
%\begin{lem}\label{CalZyg}
%Let $A\subset B\subset Q_1$ be measurable sets and $0<\delta<1$ such that
%
%\noindent (a) $|A|\leq\delta$, and
%
%\noindent (b) If $Q$ is a dyadic cube such that $|A\cap Q|>\delta|Q|$, then $\tilde{Q}\subset B$, where $\tilde{Q}$ is the predecessor of $Q$
%
%Then $|A|\leq\delta |B|$.
%\end{lem}
We recall the definition of viscosity solutions. We say that $\phi$ touches $u$ from below at $x$ whenever
\begin{equation}\label{eq23}
u(x)=\phi(x)\quad\mbox{and}\quad u(y)\geq \phi(y)\quad\mbox{for }y\in N,
\end{equation}
where $N$ is a neighborhood around $x$.
We have the analogous definition of touching from above.

\begin{dfn}\label{visco}
Let $\Omega\subset\R^n$ be open.
$u\in LSC(\Omega)\cap L^\infty(\R^n)$ (resp., $USC(\Omega) \cap L^\infty(\R^n)$) is a  viscosity supersolution (resp., subsolution) of $Iu=f$ in $\Omega$ if whenever $\phi$ touches $u$ from below (resp., above) at $x\in  \Omega$ for $\phi \in C^2(\ol{N})$ and $N$ in \eqref{eq23}, then   
\[
v:=
\left\{
\begin{aligned}
&\phi&\quad&\mbox{in}\ N,\\
&u&\quad &\mbox{in}\ \R^n\setminus N
\end{aligned}
\right.
\]
satisfies that $Iv(x)\leq f(x)$ (resp., $Iv(x)\geq f(x)$).

We say $u\in C(\Omega)\cap L^\infty(\R^n)$ is a viscosity solution of $Iu(x)=f(x)$ in $\Omega$ if $u$ is a viscosity supersolution and subsolution of $Iu(x)=f(x)$ in $\Omega$.
\end{dfn}
%\begin{rem}
%If we choose $\M^+$ and $\M^-$ instead of $I$ in Definition \ref{visco}, we also have analogous definitions of viscosity solutions of $M^+u=f$ and $M^-u=f$.
%\end{rem}

We next state several properties of the inf-convolution of $u$,
\begin{equation}\label{uh}
u_h(x):=\inf_{y\in\R^n}\left\{u(y)+\frac{1}{2h}|x-y|^2\right\},
\end{equation}
which are based on a classical results \cite{Jensen} for second order settings (see also Appendix in \cite{CIL} and Lemma A.2 in \cite{CCKS}).
To this end, we need to introduce some notations:
\begin{dfn}\label{def21}

\begin{enumerate}
\item We say operator $I$ is translation invariant if $\tau_z [Iu](x)=I[\tau_zu](x)$ holds for the translation operator $\tau_zu(x):=u(x+z)$ for $z\in\R^n$.
\item Let $\phi\in C^\infty_0(B_1)$ satisfying $\phi\geq0$ and $\|\phi\|_1=1$.
Then, for $\epsilon>0$ and $u\in LSC(\R^n)\cap L^\infty(\R^n)$, we define the standard modification of $u$ by
$
u_{h,\e}:=u_h*(\e^{-n}\phi(\e\cdot)).$
\end{enumerate}
\end{dfn}
\begin{prop}\label{infconv}
Let $I$ be a translation invariant nonlocal operator. %, that means 
%\[
%\tau_z [Iu](x)=I[\tau_zu](x),
%\]
%where $\tau_z$ is the translation operator: $\tau_zu(x)=u(x+z)$.
Let $u\in LSC(\R^n)\cap L^\infty(\R^n)$ be a viscosity supersolution of $Iu=f$ in $\R^n$.
Then following properties hold:
\begin{enumerate}
\item[(i)] $Iu_h(x)$ is defined a.e. $x\in\R^n$.
\item[(ii)] $u_h$ is a viscosity supersolution of
\[
Iu_h(x)\leq \max_{|x-y|\leq 2\sqrt{h\|u\|_\infty}}f(y)\quad\mbox{in }\R^n.
\]
\item[(iii)] Let $u_{h,\e}$ be the standard modification of $u_h$. Then, there exists a constant $C>0$ such that $Iu_{h,\e}(x)\leq C\max\{h^{-1},\|u\|_\infty\}$ in $\R^n$ for $\e>0$. Moreover,
\[
Iu_{h,\e}(x)\to Iu_h(x)\quad\mbox{a.e. in }\R^n\mbox{ as }\e\to0. 
\]
\end{enumerate}
\end{prop}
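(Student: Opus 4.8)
The plan is to establish the three assertions of Proposition~\ref{infconv} in order, leaning on the fact that $u_h$ is semiconcave (being an infimum of smooth paraboloids) together with the standard inf-convolution machinery adapted to the nonlocal setting, the details of which are deferred to the Appendix.

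\textbf{Step 1: touching paraboloids and the supersolution property (ii).} First I would recall the elementary facts about $u_h$: it is semiconcave with $\|u_h\|_\infty\le\|u\|_\infty$, the infimum defining $u_h(x)$ is attained at some $y^*$ with $|x-y^*|\le2\sqrt{h\|u\|_\infty}$, and if $\phi\in C^2$ touches $u_h$ from below at $x$ then the shifted function $\phi(\cdot-(x-y^*))$ touches $u$ from below at $y^*$. Combining this with translation invariance of $I$ and the definition of viscosity supersolution for $u$ at the point $y^*$, one obtains $Iv(x)\le f(y^*)\le\max_{|x-y|\le2\sqrt{h\|u\|_\infty}}f(y)$ for the appropriate replacement $v$; since $x$ was an arbitrary point with a touching $C^2$ paraboloid, this is exactly (ii). The one subtlety is that the test function in Definition~\ref{visco} must be replaced by $u$ outside a neighborhood $N$ of the touching point, and one must check that the shift of $N$ is compatible — this is routine since translations map neighborhoods to neighborhoods.

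\textbf{Step 2: a.e.\ second differentiability and (i).} By Alexandrov's theorem, the semiconcave function $u_h$ is twice differentiable a.e.; at every such point $x$ one has $\delta(u_h,x,y)\le C_h|y|^2$ for small $y$ from the semiconcavity bound and $\delta(u_h,x,y)\le4\|u\|_\infty$ for all $y$, so the integrand defining $D^\sigma u_h(x)$ is dominated by an integrable function (using $\sigma<2$ near the origin and $n+2+\sigma>n$ at infinity). Hence $D^\sigma u_h(x)$ is a well-defined finite symmetric matrix for a.e.\ $x$, and therefore $Iu_h(x)=\inf_{\beta}\Tr(A_\beta D^\sigma u_h(x))$ is defined a.e., giving (i). I would also note here the pointwise bound $|D^\sigma u_h(x)|\le C\max\{h^{-1},\|u\|_\infty\}$ coming from splitting the integral at $|y|=1$ and using the two estimates on $\delta$, which feeds into Step 3.

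\textbf{Step 3: mollification and stable convergence (iii).} For the standard modification $u_{h,\e}=u_h*(\e^{-n}\phi(\e\,\cdot))$, the uniform bound $Iu_{h,\e}(x)\le C\max\{h^{-1},\|u\|_\infty\}$ follows because convolution commutes with the (linear in $u$) fractional Hessian, so $D^\sigma u_{h,\e}=(D^\sigma u_h)*(\e^{-n}\phi(\e\,\cdot))$ wherever defined, and the $L^\infty$ bound on $D^\sigma u_h$ passes through the averaging; then $\Tr(A_\beta\,\cdot)$ and the infimum over $\beta$ preserve the bound. For the a.e.\ convergence $Iu_{h,\e}\to Iu_h$, the key point is that at a point of twice-differentiability $x$ of $u_h$ one has $D^\sigma u_{h,\e}(x)\to D^\sigma u_h(x)$: this is a dominated-convergence argument, using that $u_{h,\e}\to u_h$ locally uniformly, that $\delta(u_{h,\e},x,y)$ is uniformly dominated by $\min\{C|y|^2,C\|u\|_\infty\}$ (the semiconcavity constant is stable under mollification since $u_h$ is semiconcave), and pointwise convergence of $\delta(u_{h,\e},x,y)\to\delta(u_h,x,y)$ for each fixed $y$. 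Since $|\inf_\beta\Tr(A_\beta M)-\inf_\beta\Tr(A_\beta M')|\le C|M-M'|$ uniformly in $x$ (the matrices $A_\beta(x)$ are uniformly bounded by ellipticity), convergence of the fractional Hessians upgrades to convergence of $Iu_{h,\e}$.

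\textbf{Main obstacle.} The genuinely delicate point is not (i) or the uniform bound in (iii) — those are dominated-convergence bookkeeping — but making the interchange of convolution and the singular integral defining $D^\sigma$ rigorous, i.e.\ justifying $D^\sigma(u_h*\rho_\e)=(D^\sigma u_h)*\rho_\e$ given that $D^\sigma u_h$ is only defined a.e. This requires a Fubini argument on the product of the (singular but integrable) kernel and the mollifier, which is exactly the kind of technicality the paper relegates to the Appendix; I would state it as a lemma there and cite it here. A secondary care point is that the semiconcavity constant of $u_h$ is $1/h$ (independent of $u$), which is what produces the clean $\max\{h^{-1},\|u\|_\infty\}$ bound rather than something worse, and this must be tracked honestly through the mollification.
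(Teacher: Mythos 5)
Your parts (i) and (ii) follow the paper's argument and are fine: (i) is Alexandrov/Jensen a.e.\ twice differentiability plus the splitting of the kernel at $|y|=1$, and (ii) is the standard shift of the touching point to the point $x^*$ where the inf-convolution is attained, with $|x-x^*|\le 2\sqrt{h\|u\|_\infty}$ and translation invariance. The uniform bound in (iii) is also recoverable, though not by the route you state: your claimed pointwise bound $|D^\sigma u_h(x)|\le C\max\{h^{-1},\|u\|_\infty\}$ is false, because semiconcavity only controls $\delta(u_h,x,y)$ from above; $D^\sigma u_h(x)$ can have arbitrarily large negative part. What is true (and what the paper uses) is the one-sided matrix bound coming from $D^2u_{h,\e}\le h^{-1}Id$ together with $A_\beta\ge O$, which yields $Iu_{h,\e}\le C\max\{h^{-1},\|u\|_\infty\}$ directly, with no commutation of $D^\sigma$ and the mollifier needed.

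The genuine gap is in your dominated-convergence argument for $Iu_{h,\e}(x)\to Iu_h(x)$. You dominate $\delta(u_{h,\e},x,y)$ by $\min\{C|y|^2,C\|u\|_\infty\}$ ``uniformly in $\e$,'' justified by twice differentiability of $u_h$ at $x$ and stability of semiconcavity under mollification. Semiconcavity gives only the upper bound $\delta(u_{h,\e},x,y)\le h^{-1}|y|^2$; the needed lower bound $\delta(u_{h,\e},x,y)\ge -C_x|y|^2$, uniform in $\e$, does not follow from an Alexandrov expansion at the single point $x$, since $\delta(u_{h,\e},x,y)=\int\delta(u_h,x-z,y)\,\e^{-n}\phi(\e^{-1}z)\,dz$ samples second differences at nearby base points $x-z$, where the distributional Hessian $D^2u_h=M+S$ may be a large measure (downward kinks of $u_h$). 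To control this one needs quantitative information on the Hessian measure in shrinking balls around $x$, not just at $x$: the paper fixes a point with $\sup_{0<l<2}\dashint_{B_l}|D^2u_h(x+y)|\,dy<\infty$ (valid a.e.\ by Proposition \ref{Prop61}, since $M\in L^1_{loc}$ and $S$ is singular), writes $\delta(u_{h,\e},x,y)$ by Taylor's theorem through $D^2u_{h,\e}$, and shows the contribution of $B_r$ to $D^\sigma u_{h,\e}(x)$ is $O(r^{2-\sigma})$ uniformly in $\e$ via a dyadic annulus decomposition, using dominated convergence only on $\R^n\setminus B_r$ and then letting $r\to0$. Your alternative route via the identity $D^\sigma(u_h*\rho_\e)=(D^\sigma u_h)*\rho_\e$ has the same missing ingredient in a different guise: even granting the Fubini step, concluding $(D^\sigma u_h)*\rho_\e(x)\to D^\sigma u_h(x)$ a.e.\ requires $D^\sigma u_h\in L^1_{loc}$ and a Lebesgue-point argument, neither of which you establish (a.e.\ finiteness from (i) is not enough). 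So part (iii) needs the measure-theoretic control of $D^2u_h$ near $x$ that the paper's Appendix supplies; as written, your Step 3 does not close.
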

%\begin{rem}
%We say $I$ is translation invariant if $\tau_z [Iu](x)=I[\tau_zu](x)$, for any translation operator: $\tau_zu(x)=u(x+z)$.
%\end{rem}
\noindent
We give a proof of proposition \ref{infconv} in the Appendix.

Let us introduce the Riesz potential $P$ of $v\in C_0(\R^n)$,
\begin{equation}\label{P}
P(x):=\A(n,2-\sigma)\int_{\R^n}\frac{v(y)}{|x-y|^{n-(2-\sigma)}}dy,\quad(x\in\R^n),
\end{equation}
where $\A(n,2-\sigma)$ is from \eqref{A}.
%\begin{rem}
%The normalizing constant is frequently defined by
%\begin{equation}\label{A}
%\A^*(2-\sigma):=\pi^{(2-\sigma-n)/2}\frac{\Gamma((n+\sigma-2)/2)}{\Gamma((2-\sigma)/2)}
%\end{equation}
%instead of $2-\sigma$ in \eqref{P}.
%Since $\A^*(2-\sigma)/(2-\sigma)$ remains between two positive constants for $\sigma\in(0,2)$, it does not make any significant changes in our proof by a difference of definitions of the Riesz potential.
%\end{rem}
We will use properties of $P$ proved in~\cite{GS12}.

\begin{prop}[Lemma 4.11 in~\cite{GS12}]\label{infP}
For $\sigma\in(0,2)$, there exists $M_0=M_0(n,\sigma)>1$ such that for any $r>0$ and non-positive function $v\in C_0(B_r)$, the Riesz potential $P$ of $v$ satisfies
\[
-\inf_{\R^n\setminus B_{M_0r}}P\leq-\frac{1}{2}\inf_{B_{M_0r}}P.
\]
\end{prop}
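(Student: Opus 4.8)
The plan is to exploit two features of the Riesz potential: the sign condition $v\le 0$, which makes the integrand of $P$ of one sign (so no cancellation occurs), and the fact that the kernel $|x-y|^{-(n-(2-\sigma))}$ is bounded \emph{above} on the near-field set $B_r\times B_r$ and bounded \emph{below} — in fact small — on the far-field set $(\R^n\setminus B_{M_0r})\times B_r$. Note $n-(2-\sigma)=n-2+\sigma\in(0,n)$ for $n\ge2$, $\sigma\in(0,2)$, so $P$ is a well-defined continuous function vanishing at infinity, and $v\le 0$ forces $P\le 0$ on $\R^n$; if $v\equiv 0$ the statement is trivial, so assume $\|v\|_1>0$. (One may also first reduce to $r=1$ via $P(rw)=r^{2-\sigma}\widetilde P(w)$ where $\widetilde P$ is the Riesz potential of $\widetilde v(\cdot):=v(r\,\cdot)\in C_0(B_1)$, since both sides of the claim scale by $r^{2-\sigma}$; but it is just as easy to carry $r$ along and watch it cancel.)

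First I would bound $-\inf_{B_{M_0r}}P$ from below. Since $M_0>1$ we have $B_r\subset B_{M_0r}$, and for any $x_0\in B_r$ and $y\in B_r$ one has $|x_0-y|\le 2r$, hence, using $v\le 0$,
\[
P(x_0)=\A(n,2-\sigma)\int_{B_r}\frac{v(y)}{|x_0-y|^{\,n-2+\sigma}}\,dy\le\frac{\A(n,2-\sigma)}{(2r)^{\,n-2+\sigma}}\int_{B_r}v(y)\,dy=-\frac{\A(n,2-\sigma)}{(2r)^{\,n-2+\sigma}}\,\|v\|_1 ,
\]
so that $-\inf_{B_{M_0r}}P\ge \A(n,2-\sigma)(2r)^{-(n-2+\sigma)}\|v\|_1$. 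Next I would bound $-\inf_{\R^n\setminus B_{M_0r}}P=\sup_{|x|\ge M_0r}(-P(x))$ from above: for $|x|\ge M_0r$ and $y\in B_r$ we have $|x-y|\ge (M_0-1)r$, whence
\[
-P(x)=\A(n,2-\sigma)\int_{B_r}\frac{-v(y)}{|x-y|^{\,n-2+\sigma}}\,dy\le\frac{\A(n,2-\sigma)}{\big((M_0-1)r\big)^{\,n-2+\sigma}}\,\|v\|_1 .
\]
Comparing the two displays, the claimed inequality $-\inf_{\R^n\setminus B_{M_0r}}P\le-\tfrac12\inf_{B_{M_0r}}P$ holds as soon as $\big((M_0-1)r\big)^{-(n-2+\sigma)}\le\tfrac12(2r)^{-(n-2+\sigma)}$, i.e. $(M_0-1)^{\,n-2+\sigma}\ge 2^{\,n-1+\sigma}$; the factors of $r$ cancel. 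Since $n-2+\sigma>0$, it therefore suffices to take $M_0:=1+2^{(n-1+\sigma)/(n-2+\sigma)}$, which depends only on $n$ and $\sigma$ and satisfies $M_0>1$.

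There is no genuine obstacle in this argument — it is a two-sided bound on the Riesz kernel combined with the sign of $v$ — but a few points deserve attention: one must check $n-2+\sigma>0$ so that the near-field kernel is bounded below on $B_r\times B_r$ and the far-field bound is meaningful; one must verify that the resulting $M_0$ is independent of $r$ and of the particular $v$ (which it is, as the computation shows); and one uses no regularity of $v$ beyond $v\in C_0(B_r)$, while the normalizing constant $\A(n,2-\sigma)$ is irrelevant since it cancels. A mild technical remark is that "$\inf$" over the open balls coincides with the infimum over their closures by continuity of $P$, so the estimates at points $x_0\in B_r$ and $|x|\ge M_0r$ indeed control the stated infima.
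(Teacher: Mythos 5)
Your proof is correct, and it follows essentially the same route as the argument the paper cites from Guillen--Schwab: a two-sided kernel estimate using $v\le 0$, with the near-field bound $|x_0-y|\le 2r$ on $B_r$ and the far-field bound $|x-y|\ge (M_0-1)r$ outside $B_{M_0r}$. Indeed, your sufficient condition $(M_0-1)^{\,n-2+\sigma}\ge 2^{\,n-1+\sigma}$ is exactly the condition $\bigl(\tfrac{3M_0-3}{6}\bigr)^{-n+(2-\sigma)}\le\tfrac12$ that the paper records in Remark \ref{rem21} as coming from the original proof, so nothing further is needed.
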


\begin{prop}[Lemma 5.1 in~\cite{GS12}]\label{Hessian}
Let $\Omega\subset\R^n$ be open. Assume that there exists a constant $C>0$ such that $v\in C_0(\R^n)$ satisfies
\[
\int_{\R^n}|\delta(v,x,y)|\frac{dy}{|y|^{n+\sigma}}\leq C\quad\mbox{in }\Omega.
\]
Then, $P\in C^{1,1}(\Omega)$ and the following formula holds
\begin{align*}
&\quad D^2P(x)\\
&=\frac{(n+\sigma-2)(n+\sigma)\A(n,2-\sigma)}{2}
\int_{\R^n}\delta(v,x,y)
\left[
\frac{y\otimes y}{|y|^{n+2+\sigma}}-\frac{Id}{(n+\sigma)|y|^{n+\sigma}}
\right]
dy
\end{align*}
for a.e. $x\in \Omega$. In particular, since $\A(n,-\sigma)=\sigma(n+\sigma-2)\A(n,2-\sigma)$, we have
\begin{equation*}
[D^2P(x)]_\sigma=D^\sigma v(x)\quad\mbox{and}\quad
\Tr(A_\sigma D^2P(x))=\Tr(AD^\sigma v(x))
\end{equation*}
for a.e. $x\in\Omega$, where $[D^2P(x)]_\sigma$ and $A_\sigma$ are from \eqref{As}.
\end{prop}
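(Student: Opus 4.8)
The plan is to identify $P$ with the convolution $P=K*v$, where $K(y):=\A(n,2-\sigma)|y|^{-(n+\sigma-2)}$ is the Riesz kernel of order $2-\sigma$, and to show that the asserted identity is nothing but the \emph{symmetrized} second derivative of this convolution,
\[
D^2P(x)=\tfrac12\int_{\R^n}\delta(v,x,y)\,D^2K(y)\,dy .
\]
A direct differentiation gives, for $y\neq0$,
\[
D^2K(y)=(n+\sigma-2)\A(n,2-\sigma)\,|y|^{-(n+\sigma)}\Big[(n+\sigma)\tfrac{y\otimes y}{|y|^{2}}-Id\Big],
\]
so $\tfrac12 D^2K(y)$ is exactly the matrix kernel appearing in the statement (note $n+\sigma+2=n+2+\sigma$). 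Since $|D^2K(y)|\leq C_{n,\sigma}|y|^{-(n+\sigma)}$, this integral converges absolutely for every $x\in\Omega$: on $B_1$ one invokes the hypothesis $\int_{B_1}|\delta(v,x,y)|\,|y|^{-(n+\sigma)}\,dy\leq C$, and on $\R^n\setminus B_1$ one uses $|\delta(v,x,y)|\leq 4\|v\|_\infty$ together with $|y|^{-(n+\sigma)}\in L^1(\R^n\setminus B_1)$. The ``in particular'' part is then purely algebraic: applying $A\mapsto A_\sigma=\tfrac1{n+\sigma}(\sigma A+\Tr(A)Id)$ to the formula, the term $\Tr(D^2P)\,Id$ cancels the $Id$-term of $D^2P$, and using $\A(n,-\sigma)=\sigma(n+\sigma-2)\A(n,2-\sigma)$ (recorded just before the statement) one gets $[D^2P(x)]_\sigma=D^\sigma v(x)$; finally $\Tr(A_\sigma D^2P)=\Tr(A\,D^\sigma v)$ follows from the symmetry of $(A,B)\mapsto\Tr(A_\sigma B)=\tfrac1{n+\sigma}\big(\sigma\Tr(AB)+\Tr(A)\Tr(B)\big)$.

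The substance is to justify exchanging $D^2$ with this kernel, which is not integrable near $0$, and to extract the $C^{1,1}$ bound; I would do this by truncation. Fix a radial $\psi_\e\in C^\infty(\R^n)$ with $\psi_\e\equiv0$ on $B_\e$ and $\psi_\e\equiv1$ outside $B_{2\e}$, and set $K^{(\e)}:=\psi_\e K\in C^\infty(\R^n)\cap L^1(\R^n)$, $P^{(\e)}:=K^{(\e)}*v$. Then $P^{(\e)}\in C^\infty(\R^n)$ and one may differentiate under the integral sign (legitimate since $v\in C_0$ and $K^{(\e)}$ is smooth with bounded derivatives); $K^{(\e)}$ is even, and since $\nabla K^{(\e)}$ decays at infinity faster than $|y|^{1-n}$, the divergence theorem gives $\int_{\R^n}D^2K^{(\e)}=0$. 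Hence, after the substitution $y\mapsto-y$ and subtracting the vanishing quantity $2v(x)\int D^2K^{(\e)}$,
\[
D^2P^{(\e)}(x)=\int_{\R^n}D^2K^{(\e)}(y)\,v(x-y)\,dy=\tfrac12\int_{\R^n}\delta(v,x,y)\,D^2K^{(\e)}(y)\,dy .
\]
The key uniform estimate is $|D^2K^{(\e)}(y)|\leq C_{n,\sigma}\,|y|^{-(n+\sigma)}\mathbf{1}_{\{|y|\geq\e\}}$ for all $\e>0$: on $\{|y|\geq 2\e\}$ this is $|D^2K(y)|$, while on the annulus $\{\e\leq|y|\leq 2\e\}$ the terms carrying $\nabla\psi_\e$ or $D^2\psi_\e$ are $\lesssim\e^{-(n+\sigma)}\sim|y|^{-(n+\sigma)}$. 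Therefore $|D^2P^{(\e)}(x)|\leq\tfrac{C_{n,\sigma}}{2}\int_{\R^n}|\delta(v,x,y)|\,|y|^{-(n+\sigma)}\,dy\leq C'$ for all $x\in\Omega$ and all $\e>0$; and by dominated convergence — the integrands being dominated by $C_{n,\sigma}|\delta(v,x,y)|\,|y|^{-(n+\sigma)}\in L^1_y$ thanks to the hypothesis — $D^2P^{(\e)}(x)\to\tfrac12\int_{\R^n}\delta(v,x,y)D^2K(y)\,dy=:G(x)$ for every $x\in\Omega$, while $P^{(\e)}\to P$ uniformly since $\|K^{(\e)}-K\|_{L^1}\to0$.

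It remains to assemble these facts. From $|D^2P^{(\e)}|\leq C'$ on $\Omega$ and $P^{(\e)}\in C^2$, Taylor's formula with integral remainder gives $|\delta(P^{(\e)},x,z)|\leq C'|z|^2$ whenever $\overline{B_{|z|}(x)}\subset\Omega$; letting $\e\to0$ gives the same bound for $P$. A standard argument then yields $P\in C^{1,1}_{\mathrm{loc}}(\Omega)$: on a sufficiently small ball $P+\tfrac{C'}{2}|\cdot|^2$ and $\tfrac{C'}{2}|\cdot|^2-P$ have nonnegative symmetric second differences, hence are midpoint-convex and (being continuous) convex, so $P$ is simultaneously semiconvex and semiconcave, with a local bound depending only on $n,\sigma,\|v\|_\infty$ and $C$. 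In particular $D^2P$ exists a.e.\ and coincides with the distributional Hessian of $P$; since $D^2P^{(\e)}\to G$ in $L^1_{\mathrm{loc}}(\Omega)$ (pointwise convergence plus the uniform bound) and $P^{(\e)}\to P$ in $\mathcal{D}'(\Omega)$, the distributional Hessian of $P$ equals $G$, so $D^2P(x)=G(x)$ for a.e.\ $x\in\Omega$ — which is the stated formula. The main obstacle, and the sole place where the hypothesis on $v$ is genuinely needed, is this last circle of estimates: the truncated Hessians must be controlled uniformly in $\e$ both pointwise and in $L^1_{\mathrm{loc}}$, and that control rests entirely on $\int_{B_1}|\delta(v,x,y)|\,|y|^{-(n+\sigma)}\,dy\leq C$; the kernel computation, the symmetrization, and the algebra of $A\mapsto A_\sigma$ are all routine.
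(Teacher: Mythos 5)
Your argument is correct. Note, however, that the paper itself offers no proof of this proposition: it is imported verbatim as Lemma 5.1 of Guillen--Schwab \cite{GS12}, so there is nothing in the text to compare against line by line. Your reconstruction is a legitimate self-contained proof and is in the same spirit as the original one in \cite{GS12}: write $P=K*v$ with $K(y)=\A(n,2-\sigma)|y|^{-(n+\sigma-2)}$, check that $\tfrac12 D^2K$ is exactly the matrix kernel in the statement, truncate the kernel near the origin, symmetrize using the evenness of $K^{(\e)}$ together with $\int D^2K^{(\e)}=0$ to pass from $v(x-y)$ to $\delta(v,x,y)$, and then use the hypothesis $\int|\delta(v,x,y)|\,|y|^{-(n+\sigma)}dy\leq C$ on $\Omega$ both for the uniform bound $|D^2P^{(\e)}|\leq C'$ and for dominated convergence; the semiconvex/semiconcave argument correctly yields $P\in C^{1,1}_{\mathrm{loc}}(\Omega)$ and identifies the a.e.\ Hessian with the distributional limit $G$. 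The closing algebra is also right: $\Tr$ of the bracket is $\sigma/\bigl((n+\sigma)|y|^{n+\sigma}\bigr)$, so the $Id$-terms cancel in $[D^2P]_\sigma$, and $\A(n,-\sigma)=\sigma(n+\sigma-2)\A(n,2-\sigma)$ gives $[D^2P]_\sigma=D^\sigma v$, while $\Tr(A_\sigma B)=\tfrac1{n+\sigma}\bigl(\sigma\Tr(AB)+\Tr(A)\Tr(B)\bigr)$ is symmetric in $A$ and $B$, giving the trace identity. The only points worth tightening are cosmetic: the hypothesis is stated over all of $\R^n$, so the separate treatment of $B_1$ and its complement is redundant, and one should say explicitly that the uniform-in-$x$ constant $C$ on $\Omega$ is what makes the bound $C'$ and the $L^1_{\mathrm{loc}}$ convergence of $D^2P^{(\e)}$ uniform on compact subsets of $\Omega$.
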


% \begin{rem}
% The convex envelope $\Gamma$ is used in Lemma 4.11 and Lemma 5.1 in \cite{GS12}.
% However the non-positivity of $\Gamma$ in the proof of Lemma 4.11 and some regularity conditions of $\Gamma$, which indeed $v$ satisfies in Proposition \ref{Hessian}.
% % Hence it can be easily modify to our settings.
% \end{rem}

\begin{rem}\label{rem21}
For Lemma 4.11, Guillen-Schwab prove the case of $r=3$ in \cite{GS12}, but with scaling we can prove the case of general radius as well.
In addition, according to their proof, $M_0>1$ should satisfy
\[
\left(\frac{3M_0-3}{6}\right)^{-n+(2-\sigma)}\leq\frac{1}{2}.
\]
Therefore, in the case of $n\geq3$, $M_0$ can be chosen independently of $\sigma\in(0,2)$.
% Guillen-Schwab show a statement similar to Proposition \ref{infP} although $R_0(n)$ is replaced by $R_0(n,\sigma)$.
% We can modify that with slight changes, omitted here.
\end{rem}

Next, we recall $W^{2,p}$~estimates for second order equations in Theorem 1 of~\cite{Caf} and Theorem 1 of~\cite{Esc}.
We point out that settings of~\cite{Caf,Esc} is much more general than those of the next theorem.
However we restrict ourselves, in order to simplify our statements in the proceeding argument.
\begin{thm}[$W^{2,p}$~estimates]\label{W2p}
There exists $p^*=p^*(n,\lambda,\Lambda)\in[n/2,n)$ such that for any $p\in(p^*,\infty)$, there exist $C_1>0$ and $\theta_1\in(0,1)$ depending only on $n$, $\lambda$, $\Lambda$ and $p$ with the following property: Let $f\in L^p(B_3)$, $\sigma\in(0,2)$ and $\mathscr{B}$ be an index set. Suppose $A_\beta(x)\in S_{\lambda,\Lambda}$ satisfy for any $B_r(x)\subset B_3$,
\begin{equation}\label{theta}
\frac{1}{r^n}\int_{B_r(x)}\sup_{\beta\in\mathscr{B}}|A_\beta(x)-A_\beta(y)|^ndy\leq\theta_1.
\end{equation}
If $u\in W^{2,p}_{loc}(B_3)\cap L^p(B_3)$ is a $L^p$-strong solution of
\[
\inf_{\beta\in\mathscr{B}}\Tr([A_\beta(x)]_\sigma D^2u(x))
=f(x)\quad\mbox{in }B_3,
\]
where $[A_\beta(x)]_\sigma$ is from \eqref{As}, then
\[
\|u\|_{W^{2,p}(B_2)}\leq C_1(\|f\|_{L^p(B_3)}+\|u\|_{L^p(B_3)}).
\]
\end{thm}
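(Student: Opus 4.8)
The plan is to deduce the statement directly from the classical interior $W^{2,p}$~estimates for fully nonlinear uniformly elliptic second order equations, i.e.\ from Theorem~1 of~\cite{Caf} (for $p>n$) together with Theorem~1 of~\cite{Esc} (for $p^{*}<p\leq n$). All that has to be checked is that the operator $G(x,M):=\inf_{\beta\in\mathscr{B}}\Tr([A_\beta(x)]_\sigma M)$ belongs to the class treated there, with structural constants independent of $\sigma$, and that hypothesis~\eqref{theta} on $A_\beta$ turns into the small-oscillation hypothesis imposed on the coefficients in~\cite{Caf,Esc}.

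First I would record two elementary facts. By the very definition of $S_{\lambda,\Lambda}$, we have $\lambda Id\leq[A_\beta(x)]_\sigma\leq\Lambda Id$ for all $\beta\in\mathscr{B}$ and $x\in B_3$; hence $G(x,0)=0$, $M\mapsto G(x,M)$ is concave as an infimum of linear maps, and for symmetric $N$,
\[
\lambda\,\Tr(N^{+})-\Lambda\,\Tr(N^{-})\ \leq\ G(x,M+N)-G(x,M)\ \leq\ \Lambda\,\Tr(N^{+})-\lambda\,\Tr(N^{-}),
\]
so $G$ is uniformly elliptic with ellipticity constants $\lambda,\Lambda$ \emph{not depending on $\sigma$}. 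Second, from~\eqref{As},
\[
[A_\beta(x)]_\sigma-[A_\beta(y)]_\sigma=\frac{\sigma}{n+\sigma}\bigl(A_\beta(x)-A_\beta(y)\bigr)+\frac{\Tr\bigl(A_\beta(x)-A_\beta(y)\bigr)}{n+\sigma}\,Id,
\]
and since $0<\sigma<2$ the scalar factors are bounded by a dimensional constant, so there is $C_0=C_0(n)\geq1$ with $\sup_{\beta}|[A_\beta(x)]_\sigma-[A_\beta(y)]_\sigma|\leq C_0\,\sup_{\beta}|A_\beta(x)-A_\beta(y)|$. Consequently~\eqref{theta} gives
\[
\frac{1}{r^{n}}\int_{B_r(x)}\sup_{\beta\in\mathscr{B}}\bigl|[A_\beta(x)]_\sigma-[A_\beta(y)]_\sigma\bigr|^{n}\,dy\ \leq\ C_0^{\,n}\,\theta_1\qquad\text{for every }B_r(x)\subset B_3.
\]

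Then I would invoke the cited theorems. Fix $p\in(p^{*},\infty)$, with $p^{*}=p^{*}(n,\lambda,\Lambda)\in[n/2,n)$ taken from~\cite{Esc}, and let $\theta_0=\theta_0(n,\lambda,\Lambda,p)\in(0,1)$ and $C_1=C_1(n,\lambda,\Lambda,p)$ be the smallness threshold and the constant furnished by Theorem~1 of~\cite{Caf} (if $p>n$) or of~\cite{Esc} (if $p^{*}<p\leq n$) for concave uniformly elliptic operators on $B_3$ with ellipticity constants $\lambda,\Lambda$. Put $\theta_1:=\theta_0/C_0^{\,n}$. With this choice the last display is precisely the coefficient-oscillation hypothesis of those theorems, $G(x,0)=0$, and $u\in W^{2,p}_{loc}(B_3)\cap L^p(B_3)$ is an $L^p$-strong solution of $G(x,D^2u)=f$ in $B_3$; applying the corresponding theorem (after the harmless rescaling between the normalization used in~\cite{Caf,Esc} and the ball $B_3$, which affects the constant only through $n,\lambda,\Lambda,p$) yields $\|u\|_{W^{2,p}(B_2)}\leq C_1(\|f\|_{L^p(B_3)}+\|u\|_{L^p(B_3)})$.

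I do not expect a genuine obstacle here, since the proof is just this specialization; the two points deserving attention are that (i) the frozen-coefficient operator $M\mapsto\inf_{\beta}\Tr([A_\beta(x_0)]_\sigma M)$ is concave, hence has interior $C^{1,1}$ a priori estimates by the Evans--Krylov theorem — this is the ingredient that makes Theorem~1 of~\cite{Caf} and of~\cite{Esc} applicable — and (ii) the order $\sigma$ never enters the final constants, which is ensured by the two facts recorded above (the normalizing constant $\A(n,-\sigma)$ does not appear at all, the statement being purely about the second order operator $G$).
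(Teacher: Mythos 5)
Your proposal is correct and matches the paper's intent exactly: the paper states this theorem as a recollection of Theorem~1 of \cite{Caf} and Theorem~1 of \cite{Esc} without giving a proof, and your verification that the operator $M\mapsto\inf_{\beta}\Tr([A_\beta(x)]_\sigma M)$ is concave and uniformly elliptic with constants $\lambda,\Lambda$ independent of $\sigma$ (since $\lambda Id\leq[A_\beta(x)]_\sigma\leq\Lambda Id$ by definition of $S_{\lambda,\Lambda}$), together with the transfer of the oscillation condition \eqref{theta} up to a dimensional constant, is precisely the specialization the paper relies on.
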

By a covering argument, we derive the following corollary:
\begin{coro}\label{coro1}
Let $p\in(p^*,\infty)$. Let $\Omega\subset\R^n$ be bounded and open and $\Omega'\Subset\Omega$.
Then, there exist $C_2=C_2(n,\lambda,\Lambda,p,\mathrm{dist}(\Omega',\partial\Omega))>0$ with the following property: Let $f\in L^p(\Omega)$, $\sigma\in(0,2)$ and $\mathscr{B}$ be an index set.
Suppose $A_\beta(x)\in S_{\lambda,\Lambda}$ satisfy \eqref{theta} for any $B_r(x)\subset \Omega$.
If $u\in W^{2,p}_{loc}(\Omega)\cap L^p(\Omega)$ is a $L^p$-strong solution of
\[
\inf_{\beta\in\mathscr{B}}\Tr([A_\beta(x)]_\sigma D^2u(x))
=f(x)\quad\mbox{in }\Omega,
\]
where $[A_\beta(x)]_\sigma$ is from \eqref{As}, then
\[
\|u\|_{W^{2,p}(\Omega')}\leq C_2(\|f\|_{L^p(\Omega)}+\|u\|_{L^p(\Omega)}).
\]
\end{coro}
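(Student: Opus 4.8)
The plan is to derive Corollary \ref{coro1} from Theorem \ref{W2p} by a standard covering and rescaling argument. First I would reduce to the case of a ball: since $\Omega'\Subset\Omega$, there is $d:=\mathrm{dist}(\Omega',\partial\Omega)>0$, and for each $x_0\in\Omega'$ the ball $B_{d}(x_0)$ is contained in $\Omega$. Rescaling $B_d(x_0)$ to $B_3$ via the affine change of variables $z\mapsto x_0+\frac{d}{3}z$ and setting $\tilde u(z):=u(x_0+\frac d3 z)$, $\tilde A_\beta(z):=A_\beta(x_0+\frac d3 z)$, $\tilde f(z):=(\frac d3)^2 f(x_0+\frac d3 z)$, one checks that $\tilde A_\beta\in S_{\lambda,\Lambda}$ (the condition is pointwise and scale invariant), that the hypothesis \eqref{theta} is preserved because it is invariant under this scaling (both sides scale the same way, as the average is dimensionless), and that $\tilde u$ is an $L^p$-strong solution of $\inf_\beta\Tr([\tilde A_\beta(z)]_\sigma D^2\tilde u(z))=\tilde f(z)$ in $B_3$. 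Applying Theorem \ref{W2p} to $\tilde u$ and scaling back yields
\[
\|u\|_{W^{2,p}(B_{2d/3}(x_0))}\leq C(n,\lambda,\Lambda,p,d)\bigl(\|f\|_{L^p(B_d(x_0))}+\|u\|_{L^p(B_d(x_0))}\bigr)
\leq C\bigl(\|f\|_{L^p(\Omega)}+\|u\|_{L^p(\Omega)}\bigr),
\]
where one must keep careful track of how the various Lebesgue and Sobolev norms pick up powers of $d/3$ under the rescaling; all these factors are bounded in terms of $d$, $n$, $p$ alone.

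Next I would run the covering argument: the family $\{B_{2d/3}(x_0):x_0\in\Omega'\}$ covers $\Omega'$, and since $\Omega'$ is bounded (being relatively compact in the bounded set $\Omega$) its closure is compact, so finitely many balls $B_{2d/3}(x_1),\dots,B_{2d/3}(x_N)$ cover $\Omega'$, with $N=N(n,d,\Omega')$. Then
\[
\|u\|_{W^{2,p}(\Omega')}^p\leq\sum_{i=1}^N\|u\|_{W^{2,p}(B_{2d/3}(x_i))}^p\leq N\,C^p\bigl(\|f\|_{L^p(\Omega)}+\|u\|_{L^p(\Omega)}\bigr)^p,
\]
and taking $p$-th roots gives the claim with $C_2=N^{1/p}C$, which depends only on $n$, $\lambda$, $\Lambda$, $p$ and $d=\mathrm{dist}(\Omega',\partial\Omega)$ (the dependence on $N$ being reabsorbed since $N$ is controlled by $n$, the diameter of $\Omega'$ and $d$, and the diameter of $\Omega'$ is itself controlled once we note $\Omega$ is bounded — if one wants the constant to depend only on the stated quantities one simply folds in the bound on $\mathrm{diam}\,\Omega'$ coming from $\Omega$ being bounded, or states the dependence on $\Omega,\Omega'$ through their geometry).

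I do not expect any serious obstacle here; this is a routine localization. The only point demanding a little care is the bookkeeping of scaling exponents — verifying that \eqref{theta} is genuinely scale invariant (it is, because the $n$-dimensional average of an $n$-th power of a coefficient oscillation carries matching volume factors) and that no hidden $\sigma$-dependence or $d$-dependence creeps into $\theta_1$ or $C_1$ from Theorem \ref{W2p}; since Theorem \ref{W2p} already gives $C_1,\theta_1$ independent of $\sigma$ and of the index set $\mathscr{B}$, the constants produced here inherit the same independence. The mild subtlety is ensuring the final constant $C_2$ is phrased to depend only on $n,\lambda,\Lambda,p$ and $\mathrm{dist}(\Omega',\partial\Omega)$: this forces one to cover $\Omega'$ efficiently (e.g. by a lattice of balls of radius $2d/3$), so that $N$ is bounded purely in terms of $d$ and $\mathrm{diam}\,\Omega'$, the latter being finite because $\Omega$ is bounded.
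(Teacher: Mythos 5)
Your approach is the intended one: the paper gives no detailed proof and simply derives the corollary from Theorem \ref{W2p} ``by a covering argument,'' which is exactly your rescaling-plus-covering scheme, and your verifications (scale invariance of \eqref{theta}, pointwise invariance of $S_{\lambda,\Lambda}$, the bookkeeping of powers of $d/3$) are correct. The one point where your write-up falls short of the stated conclusion is the constant: bounding $\|u\|_{W^{2,p}(\Omega')}^p$ by $N\,C^p(\|f\|_{L^p(\Omega)}+\|u\|_{L^p(\Omega)})^p$ makes $C_2$ depend on the number $N$ of balls, hence on $\mathrm{diam}\,\Omega'$ (or $\mathrm{diam}\,\Omega$), which is not among the dependencies claimed in the corollary, and ``folding in'' the diameter does not remove this. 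The standard fix is to choose the centers on a lattice of spacing comparable to $d$, so that the enlarged balls $B_d(x_i)$ have overlap bounded by a constant $K(n)$ depending only on the dimension, and then sum $p$-th powers directly:
\[
\|u\|_{W^{2,p}(\Omega')}^p\leq\sum_i\|u\|_{W^{2,p}(B_{2d/3}(x_i))}^p
\leq 2^{p-1}C^p\sum_i\Bigl(\|f\|_{L^p(B_d(x_i))}^p+\|u\|_{L^p(B_d(x_i))}^p\Bigr)
\leq 2^{p-1}C^pK(n)\Bigl(\|f\|_{L^p(\Omega)}^p+\|u\|_{L^p(\Omega)}^p\Bigr),
\]
so no factor of $N$ appears and $C_2$ depends only on $n,\lambda,\Lambda,p$ and $\mathrm{dist}(\Omega',\partial\Omega)$, as stated.
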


We also recall Morrey's inequality (see Theorem 7.26 in~\cite{GT}).
\begin{thm}\label{Morrey}
Assume $p\in (n/2,\infty)$ and $p\neq n$. Then there exists a constant $C_2=C_2(n,p)>0$ such that
\begin{equation*}
\|u\|_{C^{2-n/p}(\overline{B_2})}\leq C_3\|u\|_{W^{2,p}(B_2)}.
\end{equation*}
\end{thm}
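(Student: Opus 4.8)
The statement is the second-order Sobolev embedding $W^{2,p}(B_2)\hookrightarrow C^{2-n/p}(\overline{B_2})$, and the plan is to deduce it from the standard first-order embedding theorems (Gagliardo--Nirenberg--Sobolev and Morrey) on the smooth bounded domain $B_2$, bootstrapping once through the gradient. Since $\|u\|_{W^{2,p}(B_2)}$ controls $\|u\|_{W^{1,p}(B_2)}$ and each $\|D_iu\|_{W^{1,p}(B_2)}$ for $i=1,\dots,n$, it suffices to estimate $u$ (and, when needed, its first derivatives) in the relevant H\"older space and sum up.

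\textbf{Case $p>n$.} Here $2-n/p\in(1,2)$, so $C^{2-n/p}(\overline{B_2})=C^{1,1-n/p}(\overline{B_2})$. I would apply Morrey's inequality on $B_2$ to $u$ and to each $D_iu\in W^{1,p}(B_2)$, obtaining
\[
\|u\|_{C^{0,1-n/p}(\overline{B_2})}+\sum_{i=1}^n\|D_iu\|_{C^{0,1-n/p}(\overline{B_2})}\leq C(n,p)\Big(\|u\|_{W^{1,p}(B_2)}+\sum_{i=1}^n\|D_iu\|_{W^{1,p}(B_2)}\Big)\leq C(n,p)\|u\|_{W^{2,p}(B_2)},
\]
which is exactly the asserted bound.

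\textbf{Case $n/2<p<n$.} Here $2-n/p\in(0,1)$, so $C^{2-n/p}=C^{0,2-n/p}$ and it is enough to H\"older-estimate $u$ itself. Set $p^*:=np/(n-p)$. By Gagliardo--Nirenberg--Sobolev on $B_2$ applied to $u$ and to each $D_iu$, one gets $u\in W^{1,p^*}(B_2)$ with $\|u\|_{W^{1,p^*}(B_2)}\leq C(n,p)\|u\|_{W^{2,p}(B_2)}$. The hypothesis $p>n/2$ is equivalent to $p^*=np/(n-p)>n$, so Morrey's inequality applies to $u\in W^{1,p^*}(B_2)$ and yields $\|u\|_{C^{0,1-n/p^*}(\overline{B_2})}\leq C(n,p)\|u\|_{W^{1,p^*}(B_2)}$. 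Finally $1-n/p^*=1-\tfrac{n(n-p)}{np}=\tfrac{2p-n}{p}=2-\tfrac np$, giving $\|u\|_{C^{2-n/p}(\overline{B_2})}\leq C(n,p)\|u\|_{W^{2,p}(B_2)}$.

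There is no serious obstacle here — this is a textbook chained embedding, and one could equally just cite Theorem 7.26 of \cite{GT}. The only points needing a word of care are that $B_2$ is a bounded Lipschitz (indeed smooth) domain, so the first-order Sobolev and Morrey inequalities hold directly on $B_2$ with no extension operator, and the elementary arithmetic identity $1-n/p^*=2-n/p$ together with the equivalence $p^*>n\iff p>n/2$, which is precisely why the hypothesis $p>n/2$ appears. The endpoint $p=n$ is excluded because $W^{1,n}\not\hookrightarrow L^\infty$, so the bootstrap step fails there.
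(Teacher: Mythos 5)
Your proposal is correct. Note, however, that the paper does not prove this statement at all: it is stated as a recalled classical result with a citation to Theorem 7.26 of \cite{GT}, so there is no internal proof to compare against. Your chained-embedding argument is the standard derivation behind that citation, and it is carried out correctly: in the case $p>n$ you control $u$ and each $D_iu$ in $C^{0,1-n/p}(\overline{B_2})$ by Morrey's first-order inequality, which gives exactly the $C^{1,1-n/p}=C^{2-n/p}$ norm; in the case $n/2<p<n$ you bootstrap through $W^{1,p^*}(B_2)$ with $p^*=np/(n-p)$, and the arithmetic $1-n/p^*=2-n/p$ together with the equivalence $p^*>n\iff p>n/2$ is right. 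You also correctly flag the only points where care is needed, namely that $B_2$ is a smooth bounded (hence extension) domain so the first-order Sobolev and Morrey inequalities apply with constants depending only on $n$ and $p$, and that the endpoint $p=n$ must be excluded. So your write-up supplies a self-contained proof where the paper simply defers to the textbook; either is acceptable, and there is no gap.
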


%%%%%%%%%%%%%%%%%%%%%%%%%%%%%%%%%%%%%%%%%%%%%%%%%%%%%%%%%%%%%%%%%%%%%%%%%%%%%%%%%%%%%%%%%%%%%%%%%%%%%%%%%%%%%%%%%%%%%%%%%%%%%%%%%%%%%%%%%%%%%%%%%%%%%%%%%%%%%%%%%%%%%%%%%%%%%%%%%%%%%%%%%%%%%%%%%%%%%%%%%%%%%%%%%%%%%%%%%%% section ABP maximum principles
%%%%%%%%%%%%%%%%%%%%%%%%%%%%%%%%%%%%%%%%%%%%%%%%%%%%%%%%%%%%%%%%%%%%%%%%%%%%%%%%%%%%%%%%%%%%%%%%%%%%%%%%%%%%%%%%%%%%%%%%%%%%%%%%%%%%%%%%%%%%%%%%%%%%%%%%%%%%%%%%%%%%%%%%%%%%%%%%%%%%%%%%%%%%%%%%%%%%%%%%%%%%%%%%%%%%%%%%%%

\section{ABP maximum principles}\label{S3}
In this section, we derive the following ABP maximum principle:
\begin{thm}\label{ABP}
Let $p^*$ be from Theorem \ref{W2p}. Assume $p\in (p^*,\infty)$ and $p\neq n$.
There exists a constant $C_4=C_4(n,\lambda,\Lambda,p)>0$ such that if $u\in LSC(\overline{B_1})\cap L^\infty(\R^n)$ is a viscosity supersolution of $M^-u\leq f$ in $B_1$ with $f\in C(\overline{B_1})$, $\sigma\in (n/p,2)$ and $u\geq0$ in $\R^n\setminus B_1$, then it follows that
\begin{equation*}
-\inf_{B_1}u\leq \frac{C_4\sigma M_0^{2-n/p}}{\sigma-n/p}\|f^+\|_{L^p(B_1)},
\end{equation*}
where $M_0$ is from Proposition \ref{infP}.
\end{thm}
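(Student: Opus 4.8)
The plan is to adapt the Guillen--Schwab scheme for the nonlocal ABP estimate. Replacing $f$ by $f^+$ we may assume $f\ge0$. By Proposition~\ref{infconv}, applied to the translation invariant operator $\M^-$, it suffices to prove the bound for the smooth approximations $u_{h,\e}$ with a constant independent of $h$ and $\e$, since the $L^p(B_1)$ norm of the shifted right hand side $\max_{|x-y|\le 2\sqrt{h\|u\|_\infty}}f^+(y)$ converges to $\|f^+\|_{L^p(B_1)}$ and $\inf_{B_1}u_{h,\e}\to\inf_{B_1}u$ as $h,\e\to0$. So I may assume $u\in C^\infty(\R^n)\cap L^\infty(\R^n)$, that $\M^- u\le f$ holds classically in a slightly shrunken ball and $u\ge0$ outside a slightly enlarged ball (written $B_1$ for brevity), and that $m:=-\inf_{B_1}u=-\inf_{\R^n}u>0$ (otherwise there is nothing to prove); after a harmless shift we may also assume $0$ is a regular value of $u$.

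Next set $v:=\min\{u,0\}\in C_0(\overline{B_1})$, so $v\le0$ and $\inf v=-m$. A direct check with the definition of viscosity solutions shows $v$ is a viscosity supersolution of $\M^- v\le f^+\chi_{B_1}$ in $\R^n$: on $\{u<0\}$ it coincides with $u$; on $\{u\ge0\}$ any $C^2$ function touching $v$ from below has a local maximum, so $\delta(v,x,\cdot)\le0$; and along $\partial\{u<0\}$ no $C^2$ function can touch the downward corner of $v$. Let $P$ be the Riesz potential~\eqref{P} of $v$, so $P\le0$. On the open set $\{u<0\}$ the hypothesis of Proposition~\ref{Hessian} holds, whence $P\in C^{1,1}_{\mathrm{loc}}(\{u<0\})$ and, by the trace identity there, $\inf_{A\in S_{\lambda,\Lambda}}\Tr(A_\sigma D^2P(x))=\M^- u(x)\le f^+(x)$ a.e.; since $\lambda Id\in S_{\lambda,\Lambda}$ with $(\lambda Id)_\sigma=\lambda Id$, wherever in addition $D^2P(x)\ge O$ this gives $\lambda\,\Delta P(x)\le f^+(x)$. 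By Proposition~\ref{infP} and scaling (Remark~\ref{rem21}) one gets $-\inf_{\R^n}P=-\inf_{B_{M_0}}P=:N$, and for each fixed $R\ge1$, $P\ge -N/2$ on $\partial B_{RM_0}$ with the infimum over $B_{RM_0}$ attained in $\overline{B_{M_0}}$.

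Now I would run the second order theory. Taking the convex envelope $\Gamma$ of $P$ on $B_{RM_0}$ (for a suitable $R$), its contact set lies, up to a null set, in $\{u<0\}\subset B_1$: at a point of $\{u\ge0\}$ a supporting plane forces $D^\sigma v\ge O$, impossible since there $\delta(v,x,\cdot)\le0$ unless $v\equiv0$; and along $\partial\{u<0\}$ either $P$ has a concave kink or the same sign argument applies. On the contact set $P$ is $C^{1,1}$ with $0\le D^2P$ and $\lambda\,\Delta P\le f^+$, so Corollary~\ref{coro1} on $B_{RM_0}$ (the coefficient matrices being constant, \eqref{theta} is vacuous) together with Theorem~\ref{Morrey} and the rescaling $B_{RM_0}\to B_3$ yields $\|P\|_{C^{2-n/p}(\overline{B_{M_0}})}\le C\,M_0^{2-n/p}\,\|f^+\|_{L^p(B_1)}$ with $C=C(n,\lambda,\Lambda,p)$; here $M_0^{2-n/p}$ is the scaling factor of the $C^{2-n/p}$ norm and the right hand side is confined to $B_1$ because the contact set is. Finally, reading the trace identity of Proposition~\ref{Hessian} backwards gives $v(x)=-\tfrac12\A(n,\sigma-2)\int_{\R^n}\delta(P,x,y)\,|y|^{-n-2+\sigma}\,dy$, where $\A(n,\sigma-2)$ is the constant of~\eqref{-A} with $\sigma$ replaced by $2-\sigma$, comparable to $\sigma(2-\sigma)$. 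Splitting at $|y|=1$, using $|\delta(P,x,y)|\le[P]_{C^{2-n/p}}|y|^{2-n/p}$ for $|y|\le1$ (with $\int_0^1 r^{\sigma-n/p-1}\,dr=\tfrac1{\sigma-n/p}$, convergent exactly because $\sigma>n/p$) and $|\delta(P,x,y)|\le4\|P\|_\infty$ for $|y|\ge1$ (with $\int_1^\infty r^{\sigma-3}\,dr=\tfrac1{2-\sigma}$), one obtains $m=\|v\|_\infty\le C\big(\tfrac{\sigma(2-\sigma)}{\sigma-n/p}[P]_{C^{2-n/p}}+\sigma\|P\|_\infty\big)$; combining with the previous bound and using $\|P\|_\infty=N\le\|P\|_{C^{2-n/p}(\overline{B_{M_0}})}$, $2-\sigma\le2$ and $\sigma-n/p<2$ produces $-\inf_{B_1}u\le \tfrac{C_4\,\sigma\,M_0^{2-n/p}}{\sigma-n/p}\|f^+\|_{L^p(B_1)}$, and undoing the reductions finishes the proof.

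The step I expect to be the main obstacle is the second order estimate: $P$ is $C^{1,1}$ only in the open set $\{u<0\}$ and, for $\sigma>1$, genuinely fails to be in $W^{2,p}$ near $\partial\{u<0\}$, so Corollary~\ref{coro1} cannot be applied to $P$ directly. The argument must be arranged so that only the convex envelope and its contact set --- where $P$ is $C^{1,1}$ with Hessian controlled by $f^+$ --- are used, while keeping the $\sigma$-dependence of every constant explicit so that the final constant is exactly $\sigma M_0^{2-n/p}/(\sigma-n/p)$. A secondary point requiring care is justifying the limits in the inf-convolution and mollification, both for the $L^p$ norm of the (shifted) right hand side and for $\inf_{B_1}u$.
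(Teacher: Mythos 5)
There is a genuine gap, and it sits exactly at the step you flag as ``the main obstacle'': your scheme needs the bound $\|P\|_{C^{2-n/p}(\overline{B_{M_0}})}\le C\,M_0^{2-n/p}\|f^+\|_{L^p(B_1)}$, but no contact-set argument can produce it. The convex envelope of $P$ controls only $-\inf P$ (this is, in essence, what the paper's Lemma \ref{lem32} extracts via the second-order ABP estimate of \cite{CCKS}); it says nothing about the H\"older seminorm $[P]_{C^{2-n/p}}$ on a full ball, which your final inversion step $v(x)=c\int-\delta(P,x,y)|y|^{-n-(2-\sigma)}dy$ genuinely requires, since $\delta(P,x,y)$ probes $P$ at points of $\{u\ge0\}$ where the contact set gives no information. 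On $\{u\ge0\}$ the positive part of $D^2P$ is completely uncontrolled (you only have a one-sided inequality for a supersolution), and the fallback regularity of a Riesz potential of a bounded compactly supported $v$ is only of order $2-\sigma$, which is strictly weaker than $2-n/p$ precisely in the regime $\sigma>n/p$ --- and in any case would be bounded by $\|v\|_\infty$, which is the quantity you are trying to estimate. Invoking Corollary \ref{coro1} ``on the contact set'' is not legitimate either: it is an interior estimate for $W^{2,p}_{loc}$ strong solutions of the Bellman equation on an open set, i.e.\ it needs the equation (two-sided) and $W^{2,p}_{loc}$ regularity, both of which you concede fail for your $P$ near $\partial\{u<0\}$.

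This is exactly the difficulty the paper's proof is built to avoid: instead of working with the Riesz potential of $\min\{u,0\}$, it constructs an auxiliary \emph{classical} solution $v\in C^{\sigma+\alpha}$ of the approximating equation $\M^-_\eta v=g$ in $B_3$, $v=0$ outside, with $g\in C_0^\infty(B_3)$, $g\ge f^+\chi_{B_1}$ (existence by Serra), and transfers the bound to $u$ by the comparison principle, $v\le -u^-$. Because $v$ solves an equation, Proposition \ref{Hessian} makes its potential $P$ an exact $L^p$-strong solution of the second-order Bellman equation $\inf_A\Tr(A_\sigma D^2P)=g$, so Theorem \ref{W2p} plus Morrey legitimately yield the $C^{2-n/p}$ bound on $P$ (Lemma \ref{lem31}), while $\|P\|_\infty$ is handled by Proposition \ref{infP} and the second-order ABP estimate (Lemma \ref{lem32}). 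A secondary, repairable point in your write-up: the reduction ``we may assume $u$ smooth and $\M^-u\le f$ classically'' is not immediate, because $\M^-$ is concave (an infimum of linear operators), so mollification does not preserve the supersolution inequality pointwise; the paper instead keeps $(\M^-_\eta(\cdot)_{h,\e})^+$ as the right-hand side and passes to the limit via Proposition \ref{infconv}(iii) and dominated convergence. To make your route work you would have to replace the contact-set step by some mechanism giving two-sided control of $D^2P$ near $B_1$, which in effect forces you back to an auxiliary exact solution as in the paper.
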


\begin{rem}
In the case that $\sigma\in(1,2)$ and $p=n$, we also have the ABP maximum principle in the form
%Note that the ABP maximum principle of form
\[
-\inf_{B_1}u\leq \tilde{C}_4\|f^+\|_{L^n(B_1)},
\]
where $\tilde{C}_4$ depends only on $n,\lambda$, $\Lambda$ and $\sigma$.
\end{rem}

\begin{rem}
Although we assume $f\in C(\overline{B_1})$ in Theorem \ref{ABP}, it still holds even if we assume $f\in C(B_1)\cap L^p(B_1)$.
Indeed, this is observed as follows:
For any $\eta>0$, there exists $x_0\in B_1$ such that $u(x_0)-\eta\leq\inf_{B_1}u$.
We may choose $r\in(0,1)$ such that $x_0\in B_r$.
Then, we note $f\in C(B_1)\subset C(\overline{B_r})$ and apply Theorem \ref{ABP} to $u_r(x):=u(rx)-\inf_{\R^n\setminus B_r}u$, which is a viscosity supersolution of
\[
\M^-u_r(x)\leq r^\sigma f(rx)\quad\mbox{in }B_1
\]
and $u_r\geq0$ in $\R^n\setminus B_1$.
Then, we derive
\begin{align*}
-u(x_0)+\inf_{\R^n\setminus B_r}u
\leq-\inf_{B_1}u_r\leq \frac{C_4\sigma M_0^{2-n/p}}{\sigma-n/p}r^{\sigma-n/p}\|f^+\|_{L^p(B_r)},
\end{align*}
where we applied $x_0\in B_r$ and
$
-u(x_0)
\leq -\inf_{B_r}u=-\inf_{B_1}u_r-\inf_{\R^n\setminus B_r}u
$ to the last inequality.
We complete the proof by letting $r\to1$.
\end{rem}
\begin{rem}
As we pointed out in Remark \ref{rem21}, we can choose $M_0$ independently of $\sigma\in(0,2)$ if $n\geq3$.
Hence we obtain an improved constant $\tilde{C}=\tilde{C}(n,\lambda,\Lambda,p)>0$ in Theorem \ref{ABP} such that
\begin{equation*}
-\inf_{B_1}u\leq \frac{\tilde{C}\sigma}{\sigma-n/p}\|f^+\|_{L^p(B_1)}
\end{equation*}
for $u$ as in Theorem \ref{ABP}.
On the other hand, in the case of $n=2$, a more detailed discussion is needed to improve the ABP maximum principle, which we will conduct in the future work. 
\end{rem}

Let us first, construct an approximating equation and its solution, that will be used in the proof of Theorem \ref{ABP}.
%We fix $p\in(p^*,\infty)$ and $\sigma\in(0,2)$ such that $p\neq n$ and $\sigma>n/p$ in this section.
An approximation of $\M^-$ is defined by
\[
\M^-_\eta u(x):=\inf_{A\in S_{\lambda,\Lambda}\mbox{ and }A\geq\eta Id}\Tr(AD^\sigma u(x)).
\]
%Then, w$\M^-_\eta u(x)\to \M^-u(x)$ as $\eta\to0$ for any smooth function $u$.
Let $g\in C^\infty_0(B_3)$ be an arbitrary function satisfying
\begin{equation}\label{fg}
0\leq f^+(x)\chi_{B_1}(x)\leq g(x)\quad\mbox{in }\R^n,
\end{equation}
where $\chi_{B_1}$ is the indicator function of $B_1$.
We shall consider a unique classical solution $v\in C^{\sigma+\alpha}(B_3)\cap C(\R^n)$ of
\begin{equation}\label{veq}
\left\{
\begin{aligned}
\M^-_\eta v(x)&=g(x)&\quad&\mbox{in }B_3,\\
v(x)&=0&\quad&\mbox{in }\R^n\setminus B_3,
\end{aligned}
\right.
\end{equation}
where $\alpha\in(0,1)$ depends only on $n$, $\lambda$, $\Lambda$ and $\eta$.
We intend to estimate the $L^\infty$-norm of $v$; roughly, this will lead to a lower bound of a supersolution $u$, according to comparison principles.
\begin{rem}\label{rem35}
The existence and uniqueness of $v$ follow form Theorem 1.3 in~\cite{Serra}.
In addition, $v$ is non-positive in $\R^n$ due to the comparison between $v$ and $\tilde{v}\equiv0$, which is a solution of $\M_\eta^-\tilde{v}=0$ in $\R^n$ (see Theorem 5.2 \cite{CS09} for the comparison principle).
\end{rem}

%Let $P$ be the Riesz potential of $v$, defined by \eqref{P}.

Now, we show the following two key estimates.
\begin{lem}\label{lem31}
Assume $p\in(p^*,\infty)$ and $p\neq n$.
There is a constant $C_5=C_5(n,\lambda,\Lambda,p)>0$ such that if $v\in C^{\sigma+\alpha}(B_3)\cap C(\R^n)$ is a classical solution of \eqref{veq}, then it follows that
\begin{equation*}
\|v\|_{L^\infty(B_1)}\leq \frac{C_5\sigma}{\sigma-n/p}(\|g\|_{L^p(B_3)}+\|P\|_{L^\infty(\R^n)}),
\end{equation*}
where $P$ is the Riesz potential of $v$, defined by \eqref{P}.
\end{lem}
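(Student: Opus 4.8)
The plan is to pass to the Riesz potential $P$ of $v$, which by Proposition~\ref{Hessian} solves a uniformly elliptic \emph{second-order} equation, to run the $W^{2,p}$ theory and Morrey's embedding on $P$, and then to recover $v$ from $P$ by inverting the Riesz potential; since that potential has order $2-\sigma$, this last step amounts to applying $(-\Delta)^{(2-\sigma)/2}$ to $P$. The reason for the detour is that the second-order estimates of Corollary~\ref{coro1} and Theorem~\ref{Morrey} carry constants depending only on $n,\lambda,\Lambda,p$ --- in particular independent of $\sigma$ and of the approximation parameter $\eta$ --- so that the $\sigma$-dependence of the final bound is produced, in a controlled way, only at the very last step.

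First I would check that $v\in C_0(\R^n)$ and that $\int_{\R^n}|\delta(v,x,y)|\,|y|^{-n-\sigma}\,dy$ is bounded on every ball $B_\rho\Subset B_3$: split at a small radius, bounding $\delta(v,x,\cdot)$ near the origin by the interior $C^{\sigma+\alpha}$ norm of $v$ (which gives $|\delta(v,x,y)|\le C|y|^{\min\{\sigma+\alpha,2\}}$, integrable against $|y|^{-n-\sigma}$ since $\alpha>0$) and globally by $4\|v\|_\infty$. Proposition~\ref{Hessian} then applies with $\Omega=B_{5/2}$, giving $P\in C^{1,1}(B_{5/2})$ and $\Tr(A_\sigma D^2P)=\Tr(AD^\sigma v)$ a.e.\ in $B_{5/2}$; taking the infimum over $\{A\in S_{\lambda,\Lambda}:A\ge\eta Id\}$ and using that $v$ classically solves $\M^-_\eta v=g$, it follows that $P\in W^{2,p}_{loc}(B_{5/2})\cap L^p(B_{5/2})$ is an $L^p$-strong solution of $\inf_{A}\Tr(A_\sigma D^2P)=g$ in $B_{5/2}$, an equation with $x$-independent coefficients, so \eqref{theta} holds trivially. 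Applying Corollary~\ref{coro1} on the pair $(B_{5/2},B_2)$, using $\|P\|_{L^p(B_{5/2})}\le C\|P\|_{L^\infty(\R^n)}$, and then Morrey's inequality (Theorem~\ref{Morrey}, legitimate since $p\in(n/2,\infty)$ and $p\neq n$), I obtain, with $K:=\|g\|_{L^p(B_3)}+\|P\|_{L^\infty(\R^n)}$,
\[
\|P\|_{C^{2-n/p}(\overline{B_2})}\le C\,K,\qquad C=C(n,\lambda,\Lambda,p).
\]

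Next I would recover $v$. Up to the constant $\A(n,2-\sigma)$, $P$ is the Riesz potential of order $2-\sigma$ of $v$ --- equivalently, Proposition~\ref{Hessian} together with the identity $\Tr(M_\sigma)=\Tr(M)$ gives $\Delta P=\Tr(D^\sigma v)=-(-\Delta)^{\sigma/2}v$ --- so that $v=(-\Delta)^{(2-\sigma)/2}P$, which for $x\in B_{5/2}$ (where $P$ is $C^{1,1}$ near $x$ and bounded on $\R^n$) reads pointwise
\[
v(x)=-\frac{\A(n,-(2-\sigma))}{2}\int_{\R^n}\frac{\delta(P,x,y)}{|y|^{n+2-\sigma}}\,dy .
\]
For $x\in B_1$ I split this at $|y|=1$. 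On $\{|y|<1\}$ one has $x\pm y\in B_2$, hence $|\delta(P,x,y)|\le C\|P\|_{C^{2-n/p}(\overline{B_2})}|y|^{2-n/p}\le CK|y|^{2-n/p}$, and, since $\sigma>n/p$ is in force,
\[
\int_{|y|<1}\frac{|\delta(P,x,y)|}{|y|^{n+2-\sigma}}\,dy\le CK\int_0^1 r^{\sigma-n/p-1}\,dr=\frac{CK}{\sigma-n/p}.
\]
On $\{|y|\ge1\}$ one has $|\delta(P,x,y)|\le 4\|P\|_{L^\infty(\R^n)}\le4K$, so that piece is at most $CK/(2-\sigma)$. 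Finally, using $\A(n,-(2-\sigma))\le C(n)\,\sigma(2-\sigma)$ (the normalizing-constant estimate recalled in Section~\ref{S2}, applied with $2-\sigma$ in place of $\sigma$), the factor $(2-\sigma)$ absorbs the far-field singularity while the near-field term keeps a single power of $\sigma$, giving $|v(x)|\le C_5\sigma(\sigma-n/p)^{-1}K$ for all $x\in B_1$, with $C_5=C_5(n,\lambda,\Lambda,p)$; this is the assertion.

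The main obstacle I anticipate is the bookkeeping of the $\sigma$-dependence: one must ensure that the constants imported from the second-order theory carry no hidden dependence on $\sigma$ or $\eta$ (which is exactly why the problem is transferred to the second-order equation for $P$), and that the two competing blow-ups, $(\sigma-n/p)^{-1}$ from the near field and $(2-\sigma)^{-1}$ from the far field, are precisely compensated by $\A(n,-(2-\sigma))\asymp\sigma(2-\sigma)$, so that the final constant has the stated shape. A lesser technical point is to justify rigorously the pointwise identity $v=(-\Delta)^{(2-\sigma)/2}P$ and the absolute convergence of the singular integral that defines it; the interior $C^{1,1}$ regularity of $P$ from Proposition~\ref{Hessian}, together with $P\in L^\infty(\R^n)$, is enough for both.
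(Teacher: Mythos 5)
Your proposal is correct and follows essentially the same route as the paper: pass to the Riesz potential $P$, use Proposition~\ref{Hessian} to see that $P$ solves the constant-coefficient second-order Bellman equation with right-hand side $g$, apply the $W^{2,p}$ estimate plus Morrey's inequality to control $\|P\|_{C^{2-n/p}(\overline{B_2})}$, and then recover $v=(-\Delta)^{(2-\sigma)/2}P$ and split the singular integral at $|y|=1$, balancing the factors $(\sigma-n/p)^{-1}$ and $(2-\sigma)^{-1}$ against $\A(n,-(2-\sigma))\asymp\sigma(2-\sigma)$. The only deviations (invoking Corollary~\ref{coro1} on $B_{5/2}$ instead of Theorem~\ref{W2p} on $B_3$, and explicitly verifying the hypotheses of Proposition~\ref{Hessian} and the bound $\|P\|_{L^p}\le C\|P\|_{L^\infty(\R^n)}$) are cosmetic refinements of the same argument.
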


\begin{proof}
We first observe that according to Proposition \ref{Hessian}, $P$ solves the second order equation
\begin{align}\nonumber%\label{Peq}
\inf_{A\in S_{\lambda,\Lambda}\mbox{ and }A\geq\eta Id}\Tr(A_\sigma D^2 P(x))=\M^-_\eta v(x)
=g(x)\quad\mbox{in }B_3,
\end{align}
where $A_\sigma$ is from \eqref{As}.
Hence, Theorem \ref{W2p} and \ref{Morrey} imply that $P$ satisfies
\begin{equation}
\|P\|_{C^{2-n/p}(\overline{B_2})}\leq C_2\|P\|_{W^{2,p}(B_2)}\leq C_1C_2(\|g\|_{L^p(B_3)}+\|P\|_{L^p(B_3)}).\label{eq1lem31}
\end{equation}
On the other hand, since the fractional Laplacian: $(-\Delta)^{(2-\sigma)/2}$ is the left inverse to the Riesz potential operator (see Theorem 3.22 in \cite{Sam} for instance), we have the following identity:
\begin{equation}\label{eq3lem31}
v(x)=\frac{\A(n,-(2-\sigma))}{2}\int_{\R^n}-\delta(P,x,y)\frac{dy}{|y|^{n+(2-\sigma)}}\quad\mbox{in }B_1,
\end{equation}
where $\A(n,-(2-\sigma))$ is defined by \eqref{-A} with $\sigma$ replaced by $2-\sigma$.
%Note that $\A(n,-(2-\sigma))/(\sigma(2-\sigma))$ remains between two positive constants depending only on $n$.
Noting $x+y,x-y\in B_2$ for $x,y\in B_1$, we estimate the right hand side of \eqref{eq3lem31} as follows:
\begin{align*}
&\quad\left|\frac{\A(n,-(2-\sigma))}{2}\int_{B_{1}}-\delta(P,x,y)\frac{dy}{|y|^{n+(2-\sigma)}}\right|\\
%&=\left|\frac{\A(n,-(2-\sigma))}{2}\int_{B_{1/8}}-\delta(Q,x,y)\frac{dy}{|y|^{n+(2-\sigma)}}\right|\\
&\leq \frac{\A(n,-(2-\sigma))}{2}\|P\|_{C^{2-n/p}(B_{2})}\int_{B_{1}}|y|^{-n-n/p+\sigma}dy\\
&=\frac{\A(n,-(2-\sigma))}{2}\|P\|_{C^{2-n/p}(B_{2})}\int_0^{1}r^{-n-n/p+\sigma}|\partial B_r|dr\\
&=\frac{\A(n,-(2-\sigma))|\partial B_1|}{2(\sigma-n/p)}\|P\|_{C^{2-n/p}(B_{2})}\\
%&\leq\frac{\A(n,-(2-\sigma))|\partial B_1|}{2(\sigma-n/p)}\|P\|_{C^{2-n/p}(B_{2})}\\
% &\leq\frac{C\sigma(2-\sigma)|\partial B_1|}{2(\sigma-n/p)}\|P\|_{C^{2-n/p}(B_{1/4})}
&\leq\frac{C\sigma(2-\sigma)}{(\sigma-n/p)}\|P\|_{C^{2-n/p}(B_{2})}
%&\leq C\frac{\sigma(2-\sigma)}{\sigma-n/p}\|P\|_{C^{2-n/p}(B_2)},
\end{align*}and
\begin{align*}
&\quad\left|\frac{\A(n,-(2-\sigma))}{2}\int_{\R^n\setminus B_{1}}-\delta(P,x,y)\frac{dy}{|y|^{n+(2-\sigma)}}\right|\\
&\leq \frac{\A(n,-(2-\sigma))}{2}\|P\|_{L^\infty(\R^n)}\int_{\R^n\setminus B_{1}}4|y|^{-n-(2-\sigma)}dy\\
&=\frac{\A(n,-(2-\sigma))}{2}\|P\|_{L^\infty(\R^n)}\int_{1}^\infty 4r^{-n-(2-\sigma)}|\partial B_r|dr\\
&=\frac{2\A(n,-(2-\sigma))|\partial B_1|}{(2-\sigma)}\|P\|_{L^\infty(\R^n)}\\
% &\leq\frac{2\A(n,-(2-\sigma))|\partial B_1|}{(2-\sigma)}\|P\|_{L^\infty(\R^n)}\\
% &\leq\frac{C\sigma|\partial B_1|8^2}{2}\|P\|_{L^\infty(\R^n)}
&\leq C\sigma\|P\|_{L^\infty(\R^n)}
%&\leq C\sigma\|P\|_{L^\infty(\R^n)},
\end{align*}
where $C$ depends only on $n$ since, taking account of
\begin{align*}
\lim_{\sigma\to0+0}\frac{\A(n,-(2-\sigma)}{\sigma(2-\sigma)}=\frac{\Gamma((n+2)/2)}{\pi^{n/2}},\quad\lim_{\sigma\to2-0}\frac{\A(n,-(2-\sigma)}{\sigma(2-\sigma)}=\frac{\Gamma(n/2)}{4\pi^{n/2}},
\end{align*}
$\A(n,-(2-\sigma))/(\sigma(2-\sigma))$ is bounded from below and above for $\sigma\in(0,2)$.

Thus, combine \eqref{eq1lem31} and the above inequalities, to compute
% \begin{align*}
% \|v\|_{L^\infty(B_1)}
% &\leq \frac{C\sigma(2-\sigma)|\partial B_1|}{2}\left(\frac{\|P\|_{C^{2-n/p}(B_{1/4})}}{(\sigma-n/p)}+\frac{8^2\|P\|_{L^\infty(\R^n)}}{2-\sigma}\right)\\
% &\leq \frac{C\sigma|\partial B_1|8^2(2-n/p)}{2(\sigma-n/p)}\left(\|P\|_{C^{2-n/p}(B_{1/4})}+\|P\|_{L^\infty(\R^n)}\right)\\
% &\leq \frac{C\sigma|\partial B_1|8^2(2-n/p)}{2(\sigma-n/p)}\max\{C_1C_2,1\}\left(\|g\|_{L^{p}(B_{1/4})}+\|P\|_{L^\infty(\R^n)}\right),
% \end{align*}
\begin{align*}
\|v\|_{L^\infty(B_1)}
&\leq C\sigma\left(\frac{(2-\sigma)}{\sigma-n/p}\|P\|_{C^{2-n/p}(B_{2})}+\|P\|_{L^\infty(\R^n)}\right)\\
&\leq \frac{C\sigma(2-n/p)}{\sigma-n/p}\left(\|P\|_{C^{2-n/p}(B_{2})}+\|P\|_{L^\infty(\R^n)}\right)\\
&\leq \frac{\tilde{C}\sigma(2-n/p)}{\sigma-n/p}\left(\|g\|_{L^{p}(B_{2})}+\|P\|_{L^\infty(\R^n)}\right),
\end{align*}

which proves this lemma for $C_5:=\tilde{C}(2-n/p)$.
\end{proof}

%Next we estimate $P$, by applying the ABP maximum principle for second order equations.
\begin{lem}\label{lem32}
There exists a constant $C_6=C_6(n,\lambda,\Lambda,p)>0$ such that if $v\in C^{\sigma+\alpha}(B_3)\cap C(\R^n)$ is a classical solution of \eqref{veq} and $P$ is the Riesz potential of $v$, then it follows that
\[
\|P\|_{L^\infty(\R^n)}\leq C_6M_0^{2-n/p}\|g\|_{L^p(B_3)}.
\]
\end{lem}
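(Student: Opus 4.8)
The plan is to bound $\|P\|_{L^\infty(\R^n)}$ in terms of $\inf_{B_{M_0}} P$ using Proposition \ref{infP}, and then to bound $-\inf_{B_{M_0}} P$ using the $C^{2-n/p}$ regularity of $P$ near the set where it attains its minimum together with the ABP/maximum-principle-type control coming from the fact that $P$ is non-positive (since $v\le0$ by Remark \ref{rem35}, and the Riesz kernel is positive) and compactly-near-supported.

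First I would record the structural facts: $v\le 0$ in $\R^n$ with $v\in C_0(\R^n)$ (it vanishes outside $B_3$), hence $P$, being the Riesz potential of a non-positive compactly supported function, is non-positive everywhere and tends to $0$ at infinity; moreover $P$ is continuous and, by Proposition \ref{Hessian} applied with $\Omega=B_3$ (the hypothesis $\int|\delta(v,x,y)||y|^{-n-\sigma}\,dy\le C$ holds because $v\in C^{\sigma+\alpha}(B_3)\cap C(\R^n)$ is bounded), we have $P\in C^{1,1}(B_3)$, in particular $P\in C^{2-n/p}(\overline{B_2})$ with the estimate $\|P\|_{C^{2-n/p}(\overline{B_2})}\le C_1C_2(\|g\|_{L^p(B_3)}+\|P\|_{L^p(B_3)})$ from \eqref{eq1lem31}. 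Next I would invoke Proposition \ref{infP}: since $v$ is non-positive and supported in $B_3$, taking $r=3$ gives $-\inf_{\R^n\setminus B_{3M_0}}P\le -\tfrac12\inf_{B_{3M_0}}P$; combined with the fact that $P$ is non-positive, this yields $\|P\|_{L^\infty(\R^n)} = -\inf_{\R^n}P \le -2\inf_{B_{3M_0}}P$ (or a comparable bound — the point is that the global infimum is, up to a factor $2$, attained, or nearly attained, inside $B_{3M_0}$).

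The remaining and main step is to estimate $-\inf_{B_{3M_0}}P$ by $C M_0^{2-n/p}\|g\|_{L^p(B_3)}$. Here the idea is the classical Alexandrov–Bakelman–Pucci argument transplanted to $P$: since $P$ solves the uniformly elliptic second order equation $\inf_{A}\Tr(A_\sigma D^2P)=g$ in $B_3$ (from the proof of Lemma \ref{lem31}) and is non-positive with $P=0$ achieved in the limit at infinity — so that $P$ attains an interior negative minimum over a large ball — one controls $-\inf P$ by the $L^p$ norm of $g$ on the contact set. Concretely, one rescales to a ball of radius $M_0$ (or uses $B_{3M_0}$) so that the ABP constant picks up the volume factor $M_0^{n/p}$ from $\|g\|_{L^p}$ over the larger ball and the diameter factor from the $C^{2-n/p}$ bound; the exponent $2-n/p$ on $M_0$ is exactly the Hölder exponent of $P$, so the natural route is: apply the second-order ABP estimate (equivalently, Corollary \ref{coro1} together with Theorem \ref{Morrey}) on $B_{3M_0}$ after rescaling $\tilde P(x):=P(3M_0 x)$, which solves the same type of equation with right-hand side $(3M_0)^2 g(3M_0 x)$, whose $L^p(B_1)$ norm scales like $(3M_0)^{2}(3M_0)^{-n/p}\|g\|_{L^p(B_{3M_0})}=(3M_0)^{2-n/p}\|g\|_{L^p(B_3)}$ since $g$ is supported in $B_3$. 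Pairing this with $\|P\|_{L^p(B_3)}\le C\|P\|_{L^\infty(\R^n)}$ and absorbing, via the inequality already established from Proposition \ref{infP}, the term $\|P\|_{L^\infty}$ back into the left side, closes the estimate and gives the claimed bound with $C_6=C_6(n,\lambda,\Lambda,p)$.

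The hard part will be making the absorption argument clean: \eqref{eq1lem31} bounds $\|P\|_{C^{2-n/p}(\overline{B_2})}$ by $\|g\|_{L^p(B_3)}+\|P\|_{L^p(B_3)}$, and the $L^p$ norm of $P$ is a priori only controlled by $\|P\|_{L^\infty(\R^n)}$, which is the very quantity we are trying to bound. The resolution is that Proposition \ref{infP} forces the global sup of $|P|$ to be comparable to its infimum over $B_{3M_0}$, and the ABP-type estimate controls that infimum \emph{linearly} in $\|g\|_{L^p}$ with a constant that, after the rescaling, carries the factor $M_0^{2-n/p}$; one then has an inequality of the shape $\|P\|_{L^\infty}\le C M_0^{2-n/p}(\|g\|_{L^p(B_3)}+\varepsilon\|P\|_{L^\infty})$ which can be rearranged provided the implicit constants are tracked carefully (alternatively, one argues directly at the minimum point $x_0\in\overline{B_{3M_0}}$ of $P$, using that in a fixed-size ball around $x_0$ inside $B_3$ — or a rescaled version — the $C^{1,1}$ paraboloid-touching property from Proposition \ref{Hessian} together with the measure estimate on the contact set gives the bound without ever invoking $\|P\|_{L^p}$). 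I would present the second, more self-contained route if the rescaling bookkeeping in the first becomes unwieldy.
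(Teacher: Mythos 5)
Your outline follows the paper's strategy at the top level (reduce to $-\inf_{B_{3M_0}}P$ via Proposition \ref{infP}, then hit $P$ with a second--order ABP estimate on the large ball), but it has a genuine gap at the main step. You only know that $P$ solves $\inf_{A}\Tr(A_\sigma D^2P)=g$ \emph{in $B_3$}, yet your rescaling argument (applying Corollary \ref{coro1}/the ABP estimate to $\tilde P(x)=P(3M_0x)$ on $B_1$) implicitly uses the equation, or at least the supersolution inequality, on all of $B_{3M_0}$. For an ABP bound over $B_{3M_0}$ the contact set of the convex envelope can lie anywhere in that ball, so the equation in $B_3$ alone does not suffice, and nothing in your argument controls $P$ on $B_{3M_0}\setminus B_3$. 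The missing idea, which is the heart of the paper's proof, is that since $v\le 0$ and $v$ is supported in $B_3$, one has $\delta(v,x,y)\le 0$ for $x\in\R^n\setminus B_3$, and because the matrices $A$ satisfy $A\ge O$ this gives $\M^-_\eta v\le 0\le g$ in all of $\R^n$; transferring this through Proposition \ref{Hessian} makes $P$ a supersolution of the second--order equation on the whole of $B_{3M_0}$, with the positive part of the right-hand side still controlled by $g$. Only then does the CCKS ABP estimate on $B_{3M_0}$ apply, and the boundary term $-\inf_{\partial B_{3M_0}}P$ is absorbed using Proposition \ref{infP}. To make the a.e.\ statements and the strong-solution ABP legitimate one also needs the regularization layer (inf-convolution $v_h$ and mollification $v_{h,\e}$, Proposition \ref{infconv}, then limits $\e\to0$, $h\to0$), which your proposal omits.

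A secondary point: your worry about absorbing $\|P\|_{L^p(B_3)}$ coming from \eqref{eq1lem31} is a red herring. The paper's proof of this lemma never uses the interior $W^{2,p}$/H\"older estimate at all; the only inputs are the global supersolution property just described, the second-order ABP maximum principle on $B_{3M_0}$, and Proposition \ref{infP} to absorb the boundary infimum. There is consequently no $\varepsilon\|P\|_{L^\infty}$ term to rearrange, and the factor $M_0^{2-n/p}$ comes directly from the radius dependence $(3M_0)^{2-n/p}$ in the ABP estimate, not from the H\"older exponent of $P$.
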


\begin{proof}
Note that $v$ is non-positive as we mentioned in Remark \ref{rem35}.
Hence from the fact that $P$ is also non-positive in $\R^n$ and Proposition \ref{infP}, it suffices to see
\begin{equation}\label{eq1lem32}
-\inf_{\R^n}P=-\inf_{B_{3M_0}}P\leq C_6M_0^{2-n/p}\|g\|_{L^p(B_3)}.
\end{equation}
%where $R_0$ is from Proposition \ref{infP}.
If $p=n$, this is already proved by~\cite{GS12}.
We will confirm this in the general case.

Since $v$ is non-positive in $\R^n$ and supported by $B_3$, we have $\delta(v,x,y)\leq0$ for $x\in \R^n\setminus B_3$ and $y\in \R^n$, which implies that $v$ is a viscosity supersolution of
\begin{equation*}
\M^-_\eta v(x)\leq g(x)\quad\mbox{in }\R^n.
\end{equation*}
%Now, we shall construct approximations of $v$.
Let $v_h$ be the inf-convolution of $v$, which is a viscosity supersolution of
\[
\M^-_\eta v_h(x)\leq \max_{|x-y|\leq 2\sqrt{h\|u\|_\infty}}\{ g(y)\}=:g_h(x)\quad\mbox{in }\R^n.
\]
We also denote by $v_{h,\e}$ the standard modification of $v_h$, and by $P_h$ and $P_{h,\e}$ Riesz potentials of $v_h$ and $v_{h,\e}$ respectively.
We may select $h,\e>0$ so small that $v_h$ and $v_{h,\e}$ are supported by $B_4$.
Then, from Proposition \ref{Hessian}, $P_{h,\e}$ solves
\[
\inf_{A\in S_{\lambda,\Lambda}\mbox{ and }A\geq\eta Id} \Tr(A_\sigma D^2P_{h,\e}(x))=\M^-_\eta v_{h,\e}(x)\quad\mbox{in }\R^n.
\]
Hence, we employ the ABP maximum principle of second order equations (see (2.3) in~\cite{CCKS}), to derive
\[
-\inf_{B_{3M_0}}P_{h,\e}\leq-\inf_{\partial B_{3M_0}}P_{h,\e}+C(3M_0)^{2-n/p}\|(\M^-_\eta v_{h,\e})^+\|_{L^p(B_{R_0})}.
\]
Therefore, Proposition \ref{infP} gives
\[
-\inf_{B_{R_0}}P_{h,\e}\leq \tilde{C}M_0^{2-n/p}\|(\M^-_\eta v_{h,\e})^+\|_{L^p(B_{R_0})}.
\]
Noting $(\M^-_\eta v_{h,\e})^+$ is uniformly bounded for $\e>0$ from (iii) in Proposition \ref{infconv}, we send $\e\to0$ to find
\begin{equation*}
-\inf_{B_{R_0}}P_{h}\leq \tilde{C}M_0^{2-n/p}\|(\M^-_\eta v_{h})^+\|_{L^p(B_{R_0})}\leq \tilde{C}M_0^{2-n/p}\|g_h\|_{L^p(B_{R_0})}.
\end{equation*}
Consequently, let $h\to0$, which proves \eqref{eq1lem32}. 
\end{proof}

We combine Lemma \ref{lem31} and \ref{lem32}, to see
\begin{align}
\|v\|_{L^\infty(B_1)}
&\leq\frac{C_5\sigma}{\sigma-n/p}(\|g\|_{L^p(B_3)}+\|P\|_{L^\infty(\R^n)})\nonumber\\
&\leq\frac{C\sigma M_0^{2-n/p}}{\sigma-n/p}\|g\|_{L^p(B_3)},\label{infv}
\end{align}
where applied $M_0>1$ to the second inequality.
Employing \eqref{infv}, we complete the proof of Theorem \ref{ABP}.

\noindent{\bf Proof of Theorem \ref{ABP}.}
We start to modify a supersolution $u$ similarly to Lemma \ref{lem32}. 
It is standard that $-u^-$ is a viscosity supersolution of
\[
\M^-[-u^-](x)\leq f^+(x)\chi_{B_1}(x)\quad\mbox{in }\R^n.
\]
Let $(-u^-)_h$ be the inf-convolution of $-u^-$ and $(-u^-)_{h,\e}$ be the standard modification of $(-u^-)_h$.
Then, $(-u^-)_h$ is a viscosity supersolution of
\begin{equation}\nonumber%\label{uheq}
\M^-[(-u^-)_h](x)\leq\max_{|x-y|\leq 2\sqrt{h\|u\|_\infty}}\{ f^+(y)\chi_{B_1}(y)\}=:f^+_h(x)\quad\mbox{in }\R^n.
\end{equation}
Since $-u^-$ is supported in $B_1$, we may fix $h,\e>0$ so small that $(-u^-)_h$ and $(-u^-)_{h,\e}$ are supported in $B_3$.

We next consider an arbitrary function $g\in C^\infty_0(B_3)$ satisfying
\begin{equation}\label{eq1thm31}
0<(\M^-_\eta [(-u^-)_{h,\e}](x))^+\leq g(x)\quad\mbox{in }\R^n,
\end{equation}
and a unique classical solution $v$ of \eqref{veq}.
%Now we employ comparison principles with $v$ and $(-u^-)_{h,\e}$, from Theorem 5.2 in~\cite{CS09}.
%Since
Then, we observe that
\[
\M^-_\eta [(-u^-)_{h,\e}]\leq g=\M^-_\eta v\quad\mbox{in }B_3\quad\mbox{and}\quad(-u^-)_{h,\e}=v=0\quad \mbox{in }\R^n\setminus B_3,
\]
%\[
%\begin{aligned}
%\M^-_\eta [(-u^-)_{h,\e}]&\leq g=\M^-_\eta v&\quad&\mbox{in }B_3\quad\mbox{and}\\
%(-u^-)_{h,\e}&=v=0&\quad &\mbox{in }\R^n\setminus B_3,
%\end{aligned}
%\]
and so $v\leq (-u^-)_{h,\e}$ in $B_3$ according to comparison principles (see Theorem 5.2 in~\cite{CS09}).
Consequently, it follows from \eqref{infv},
\[
-\inf_{B_1}(-u^-)_{h,\e}\leq\|v\|_{L^\infty(B_1)} \leq \frac{C\sigma M_0^{2-n/p}}{\sigma-n/p}\|g\|_{L^p(B_3)}.
\]
Choose $g=g_k$ such that $g_k(x)\searrow(\M^-_\eta [(-u^-)_{h,\e}](x))^+$ for $x\in\R^n$ as $k\to\infty$.
Then, we derive
\begin{align*}
&\quad-\inf_{B_1}(-u^-)_{h,\e}\leq \frac{C\sigma M_0^{2-n/p}}{\sigma-n/p}\|(\M^-_\eta[(- u^-)_{h,\e}])^+\|_{L^p(B_3)}.
\end{align*}
%By letting $\eta\to0$, we have $(\M^-_\eta[(-u^-)_{h,\e}](x))^+\to(\M^-(-u^-)_{h,\e}](x))^+$ in $B_3$.
%Noting $(\M^-[(-u^-)_{h,\e}](x))^+$ is uniformly bounded  in $B_3$ according to (iii) of Proposition~\ref{infconv}, the dominated convergence theorem implies
Note that $(\M^-_\eta[(- u^-)_{h,\e}])^+$ is bounded from above for $\eta,\e>0$ by (iii) of Proposition\ref{infconv}.
Hence, by letting $\eta\to0$ and $\e\to0$, the dominate convergence theorem shows
\[
-\inf_{B_1}(-u^-)_{h}\leq \frac{C\sigma M_0^{2-n/p}}{\sigma-n/p}\|(\M^- [(-u^-)_{h}])^+\|_{L^p(B_3)}\leq \frac{C\sigma M_0^{2-n/p}}{\sigma-n/p}\|f_h^+\|_{L^p(B_3)}.
\]
Finally, passing to limits as $h\to0$, we prove Theorem \ref{ABP}.$\quad\Box$

%%%%%%%%%%%%%%%%%%%%%%%%%%%%%%%%%%%%%%%%%%%%%%%%%%%%%%%%%%%%%%%%%%%%%%%%%%%%%%%%%%%%%%%%%%%%%%%%%%%%%%%%%%%%%%%%%%%%%%%%%%%%%%%%%%%%%%%%%%%%%%%%%%%%%%%%%%%%%%%%%%%%%%%%%%%%%%%%%%%%%%%%%%%%%%%%%%%%%%%%%%%%%%%%%%%%%%%%%%% section $W^{\sigma,p}$-estimates
%%%%%%%%%%%%%%%%%%%%%%%%%%%%%%%%%%%%%%%%%%%%%%%%%%%%%%%%%%%%%%%%%%%%%%%%%%%%%%%%%%%%%%%%%%%%%%%%%%%%%%%%%%%%%%%%%%%%%%%%%%%%%%%%%%%%%%%%%%%%%%%%%%%%%%%%%%%%%%%%%%%%%%%%%%%%%%%%%%%%%%%%%%%%%%%%%%%%%%%%%%%%%%%%%%%%%%%%%%

\section{$W^{\sigma,p}$~estimates}\label{S4}
This section present the following $W^{\sigma,p}$~estimate:
\begin{thm}\label{thm41}
Let $p^*$ be from Theorem \ref{W2p} and $p\in(p^*,\infty)$.
There exists a constant $C_7>0$ depending only on $n,\lambda,\Lambda$ and $p$ with the following property:
Let $f\in C(\overline{B_3})$ and $\mathscr{B}$ be an index set.
For $\beta\in\mathscr{B}$, $A_\beta(x)\in S_{\lambda,\Lambda}$ have a uniform modulus of continuity and satisfy \eqref{theta}.
%Let $I$ be an nonlocal operator of form \eqref{operator}.
If $u\in C(\R^n)\cap L^\infty(\R^n)$ is a viscosity solution of \eqref{eq1} with $\sigma\in(n/p,2)$, then it follows that
\begin{equation}\label{eq1thm41}
\|D^\sigma u\|_{L^p(B_2)}\leq C_7(\|f\|_{L^p(B_3)}+\|u\|_{L^\infty(\R^n)}).
\end{equation}
\end{thm}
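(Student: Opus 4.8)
The plan is to reduce \eqref{eq1thm41} to the second order $W^{2,p}$ estimate of Corollary \ref{coro1} via the Riesz potential, exactly in the spirit of Lemmas \ref{lem31} and \ref{lem32}. Assume first that $u$ is regular enough, say $u\in C^{\sigma+\alpha}_{loc}(B_3)$, that a compactly supported smooth truncation $w$ of $u$ satisfies $\int_{\R^n}|\delta(w,x,y)|\,|y|^{-n-\sigma}\,dy<\infty$ locally. Fix $\zeta\in C^\infty_0(B_3)$ with $\zeta\equiv1$ on a neighbourhood of $\overline{B_{5/2}}$ and set $w:=\zeta u\in C^{\sigma+\alpha}(\R^n)$, so $\|w\|_\infty\le\|u\|_\infty$. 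For $x\in B_{5/2}$ the tail $D^\sigma[(\zeta-1)u](x)$ is bounded by $C_n\|u\|_\infty$, since $(\zeta-1)u$ vanishes near $\overline{B_{5/2}}$ and $\zeta(x)=1$, so the corresponding $\delta$ is supported in a region $\{|y|\ge c\}$ on which the kernel has mass $O(1)$ (here $\A(n,-\sigma)/\sigma$ stays bounded for $\sigma\in(0,2)$); hence $\|D^\sigma u-D^\sigma w\|_{L^p(B_2)}\le C\|u\|_\infty$, and $w$ is a pointwise solution of $Iw=\tilde f$ in $B_{5/2}$ with $\|\tilde f\|_{L^p(B_{5/2})}\le\|f\|_{L^p(B_3)}+C\|u\|_\infty$. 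Thus it suffices to bound $\|D^\sigma w\|_{L^p(B_2)}$.

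Let $P$ be the Riesz potential of $w$. By Proposition \ref{Hessian}, $P\in C^{1,1}_{loc}(B_{5/2})\subset W^{2,p}_{loc}(B_{5/2})$, it is bounded with $\|P\|_{L^\infty}\le C\|w\|_\infty$ (using that $\A(n,2-\sigma)/(2-\sigma)$ stays bounded for $\sigma\in(n/p,2)$ --- this is where $\sigma>n/p$ is needed when $n=2$), and it satisfies
\[
\inf_{\beta\in\mathscr{B}}\Tr\!\big([A_\beta(x)]_\sigma D^2P(x)\big)=\inf_{\beta\in\mathscr{B}}\Tr\!\big(A_\beta(x)D^\sigma w(x)\big)=Iw(x)=\tilde f(x)
\]
for a.e. $x\in B_{5/2}$, which is precisely the equation of Corollary \ref{coro1} with the same coefficients $A_\beta\in S_{\lambda,\Lambda}$ satisfying \eqref{theta}. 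Corollary \ref{coro1} on $B_2\Subset B_{5/2}$ then gives $\|P\|_{W^{2,p}(B_2)}\le C(\|\tilde f\|_{L^p(B_{5/2})}+\|P\|_{L^p(B_{5/2})})$, and since $D^\sigma w=[D^2P]_\sigma$ a.e. with $|[M]_\sigma|\le C_n|M|$,
\[
\|D^\sigma w\|_{L^p(B_2)}\le C_n\|D^2P\|_{L^p(B_2)}\le C\big(\|f\|_{L^p(B_3)}+\|u\|_\infty\big),
\]
which together with the tail bound proves \eqref{eq1thm41} in this case.

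It remains to remove the regularity assumption. Here I would argue by approximation: mollify the coefficients and the datum in $x$ (so $A_\beta^\rho\to A_\beta$ uniformly by the uniform modulus of continuity, and $f^\rho\to f$ uniformly on $\overline{B_3}$), and add $\eta\,Id$ to the coefficients so that the kernels become nondegenerate and fall within the existence and Schauder/Evans--Krylov theory of \cite{Serra}; the regularized coefficients remain uniformly elliptic with constants $\lambda,\Lambda+\eta$ and still satisfy \eqref{theta} on balls slightly inside $B_3$. Let $u^{\rho,\eta}\in C^{\sigma+\alpha}_{loc}(B_3)\cap C(\R^n)$ solve the regularized problem in $B_3$ with exterior datum $u$; by comparison, interior estimates and barriers, $u^{\rho,\eta}\to u$ uniformly on $\overline{B_3}$, so $\|u^{\rho,\eta}\|_{L^\infty(\R^n)}\to\|u\|_{L^\infty(\R^n)}$. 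Applying the case already treated gives $\|D^\sigma u^{\rho,\eta}\|_{L^p(B_2)}\le C(\|f\|_{L^p(B_3)}+\|u\|_\infty)+o(1)$ with $C$ independent of $\sigma,\rho,\eta$; since $\int(D^\sigma u^{\rho,\eta})\,\phi=\int u^{\rho,\eta}\,D^\sigma\phi\to\int u\,D^\sigma\phi$ for $\phi\in C^\infty_c(B_2)$, we obtain $D^\sigma u^{\rho,\eta}\rightharpoonup D^\sigma u$ in $L^p(B_2)$, and weak lower semicontinuity of the norm yields \eqref{eq1thm41}. The step I expect to be the main obstacle is this last one: producing classical solutions of the regularized problems with the correct exterior datum and proving their uniform convergence to $u$ up to $\partial B_3$ (so that Proposition \ref{Hessian} may be legitimately invoked), while keeping every constant dependent only on $n,\lambda,\Lambda,p$. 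Once the needed regularity is at hand, the cutoff estimate and the passage through Corollary \ref{coro1} are routine. One could alternatively bypass the Riesz potential and run a Caffarelli--Calder\'on--Zygmund argument directly on the viscosity solution using the $L^p$ ABP maximum principle of Theorem \ref{ABP}, but the reduction above seems shorter and better matched to the machinery already in place.
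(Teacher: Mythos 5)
Your overall route is the same as the paper's: approximate by classical solutions of a regularized problem (Serra's existence theorem), truncate, pass through the Riesz potential and Proposition \ref{Hessian} to reduce to the second order estimate of Theorem \ref{W2p}/Corollary \ref{coro1}, and finish by weak compactness of $D^\sigma u_\rho$ in $L^p(B_2)$. The first half of your argument (cutoff, tail bound, identification $[D^2P]_\sigma=D^\sigma w$, application of Corollary \ref{coro1}) matches the paper's computation for $w_\rho:=u_\rho\chi_{B_4}$ essentially line by line, modulo the cosmetic difference of a smooth cutoff versus an indicator truncation.

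The genuine gap is exactly the step you flag and leave open: the uniform convergence $u^{\rho,\eta}\to u$ on $\overline{B_3}$. ``Comparison, interior estimates and barriers'' do not suffice as stated, because $u$ and $u^{\rho,\eta}$ solve \emph{different} equations, and the natural inequality
\[
\M^-[u-u^{\rho,\eta}](x)\;\leq\;|f(x)-f_\rho(x)|+\sup_{\beta}|A_\beta(x)-A_{\beta,\rho}(x)|\,|D^\sigma u^{\rho,\eta}(x)|\quad\mbox{in }B_3
\]
has a right-hand side that is only controlled in $L^p$ on interior balls (via Corollary \ref{coro1} applied to $P_\rho$ on $B_r$, $r<3$), never in $L^\infty$. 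So neither the comparison principle nor the Caffarelli--Silvestre or Guillen--Schwab ABP estimates (which need $\|\cdot\|_\infty$ of the inhomogeneity) can convert this into a sup bound on $(u-u^{\rho,\eta})^-$. The missing idea is that this is precisely where Theorem \ref{ABP} --- the $L^p$-only ABP proved in Section \ref{S3} --- must be invoked: the paper applies it to $u-u_\rho-\inf_{B_3\setminus B_r}(u-u_\rho)$ on $B_r$, uses the uniform interior bound $\|D^\sigma u_\rho\|_{L^p(B_r)}\leq C_r(\|f_\rho\|_{L^p(B_3)}+\|u_\rho\|_\infty)$ together with $\sup_\beta\|A_\beta-A_{\beta,\rho}\|_{L^\infty(B_r)}\to0$, and handles the boundary layer $B_3\setminus B_r$ by a uniform modulus of continuity up to $\partial B_3$ built from the barrier $((|x|-3)^+)^\alpha$ as in \cite{CS11b}. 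Your closing remark that Theorem \ref{ABP} could be used for an ``alternative'' Calder\'on--Zygmund argument suggests you did not see that it is indispensable for the very approximation route you chose; without it the convergence step, and hence the identification of the weak limit of $D^\sigma u^{\rho,\eta}$ with $D^\sigma u$, does not go through.
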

\begin{rem}
Since $A_\beta$ has the uniform modulus of continuity, one may prove an estimate without \eqref{theta}. However, due to \eqref{theta}, 
$C_6$ depends on $\theta_1$ of \eqref{theta}, but the modulus of continuity.
%we will obtain \eqref{eq1thm41} independently of the modulus of continuity.
\end{rem}
\begin{rem}
It is valid that \eqref{eq1thm41} holds for any $p\in(1,\infty)$ if we take the fractional Laplacian $-(-\Delta)^{\sigma/2}$ as the nonlocal operator $I$ in \eqref{eq1}.
In fact, according to $L^p$-estimates of Poisson's equation, we observe
\[
\|D^\sigma u\|_p=\|[D^2P]_\sigma\|_p\leq C\|\Delta P\|_p=C\|-(-\Delta)^{\sigma/2} u\|_p.
\]
%which proves \eqref{eq1thm41}.
\end{rem}
\begin{proof}
We first approximate our equation as follows.
Let $A_{\beta,\rho}$ be a smooth function such that $A_{\beta,\rho}(x)\in S_{\lambda,\Lambda}$, $A_{\beta,\rho}(x)\geq\rho Id$ and 
\[
\sup_{\beta\in\mathscr{B}}\|A_{\beta,\rho}- A_\beta\|_{L^\infty(B_3)}\to0\quad\mbox{as }\rho\to0.
\]
Also, set an approximation $f_\rho$ of $f$ by the standard modifier.
Then for some $\alpha\in(0,1)$, there exists a unique classical solution $u_\rho\in C^{\sigma+\alpha}(B_3)\cap C(\R^n)\cap L^\infty(\R^n)$ of
\begin{equation}
\left\{
\begin{aligned}\nonumber
I_\rho u_\rho&=f_\rho&\quad&\mbox{in }B_3,\\
u_\rho&=u&\quad&\mbox{in }\R^n\setminus B_3,
\end{aligned}
\right.
\end{equation}
where $I_\rho$ is of form \eqref{operator} replacing $A_\beta$ by $A_{\beta,\rho}$.
Here, the existence and uniqueness of $u_\rho$ follow from Theorem 1.3 in~\cite{Serra}.

Now, we need to show \eqref{eq1thm41} for $u_\rho$ instead of $u$.  
Setting $w_\rho:=u_\rho\chi_{B_4}$, we compute
\begin{align*}
|I_\rho w_\rho (x)|&\leq |I_\rho u_\rho(x)|+\A(n,-\sigma)\Lambda\int_{\R^n\setminus B_1}|u_\rho(x+y)|\frac{dy}{|y|^{n+\sigma}}\\
&\leq |f_\rho(x)|+C\|u_\rho\|_{L^\infty(\R^n)}\quad\mbox{in }B_3,
\end{align*}
and
\begin{align}
\|D^\sigma u_\rho\|_{L^p(B_2)}&\leq\|D^\sigma w_\rho\|_{L^p(B_2)}+\|D^\sigma (u_\rho-w_\rho)\|_{L^p(B_2)}\nonumber\\
&\leq\|D^\sigma w_\rho\|_{L^p(B_2)}+C\|u_\rho\|_{L^\infty(\R^n)}.\label{eq2thm41}
\end{align}
%Because of the above last inequality, it is enough to estimate $\|D^\sigma w_\rho\|_{L^p(B_2)}$, to obtain \eqref{eq1thm41} for $u_\rho$.
Let $P_\rho$ be the Riesz potential of $w_\rho$ defined by \eqref{P}, where $v$ is replaced by $w_\rho$.
%We note that since $w_\rho$ is bounded and has a compact support, $P_\rho$ is continuous.
Then, from Proposition \ref{Hessian}, $P$ satisfies
\[
\left|\inf_{a\in\mathcal{A}}\Tr([A_{\beta,\rho}(x)]_\sigma D^2P_{\rho}(x))\right|=|I_\rho w_\rho(x)|\leq |f_\rho(x)|+C\|u_\rho\|_{L^\infty(\R^n)},
\]
where $[A_{\beta,\rho}(x)]_\sigma$ is from \eqref{As}.
Thus, Theorem \ref{W2p} implies
\begin{equation*}
\|D^\sigma w_\rho\|_{L^p(B_2)}=\|[D^2P_\rho(\cdot)]_\sigma\|_{L^p(B_2)}\leq C(\|f_\rho\|_{L^p(B_3)}+\|u_\rho\|_{L^\infty(\R^n)}).
\end{equation*}
Consequently, the above and \eqref{eq2thm41} imply that \eqref{eq1thm41} holds for $u_\rho$.
Similarly, by applying Corollary \ref{coro1}, we also have for $0<r<3$,
\begin{align}
\|P_\rho\|_{W^{2,p}(B_r)}&\leq C_r(\|f_\rho\|_{L^p(B_3)}+\|P_\rho\|_{L^p(B_3)})\quad\mbox{and}\nonumber\\%\label{eq3thm41}\\
\|D^\sigma u_\rho\|_{L^p(B_r)}&\leq C_r(\|f_\rho\|_{L^p(B_3)}+\|u_\rho\|_{L^\infty(\R^n)}),\label{eq4thm41}
\end{align}
where we take $C_r:=C_2(n,\lambda,\Lambda,p,3-r)$ as in Corollary \ref{coro1}.

Next, we claim that $u_\rho\to u$ uniformly in $B_3$ as $\rho\to0$.
To see this, we first observe that $u-u_\rho$ is a viscosity supersolution of
\begin{align*}
  \M^-[u-u_\rho](x)&\leq Iu(x)-Iu_\rho(x)\\
  &\leq |f(x)-f_\rho(x)|+|I u_\rho(x)-I_\rho u_\rho(x)|\\
  &\leq|f(x)-f_\rho(x)|+\sup_{\a\in\mathcal{A}}|A_\beta(x)-A_{\beta,\rho}(x)||D^\sigma u_\rho(x))|\quad\mbox{in }B_3.
\end{align*}
Therefore, by applying Theorem \ref{ABP} to $u-u_\rho-\inf_{B_3\setminus B_r}(u-u_\rho)$ in $B_r\subset B_3$, we derive
\begin{align*}
  &\quad-\inf_{B_r}(u-u_\rho)+\inf_{B_3\setminus B_r}(u-u_\rho)\\
  &\leq C_2r^{\sigma-n/p}\left\{\|f-f_\rho\|_{L^p(B_r)}+\sup_{a\in\mathcal{A}}\|A_\beta-A_{\beta,\rho}\|_{L^\infty(B_r)}\|D^\sigma u_\rho\|_{L^p(B_r)}\right\}.
\end{align*}
Take the limsup for $\rho\to0$ to see 
\[
  \limsup_{\rho\to0}\left(-\inf_{B_r}(u-u_\rho)\right)\leq\limsup_{\rho\to0}\left(-\inf_{B_3\setminus B_r}(u-u_\rho)\right)
\]
and hence
\begin{align*}
   \limsup_{\rho\to0}\left(-\inf_{B_3}(u-u_\rho)\right)&\leq\limsup_{\rho\to0}\left(-\inf_{B_r}(u-u_\rho)-\inf_{B_3\setminus B_r}(u-u_\rho)\right)\\
   &\leq2\limsup_{\rho\to0}\left(-\inf_{B_3\setminus B_r}(u-u_\rho)\right)
\end{align*}
for any $r>0$.
Note that a uniform modulus of continuity of $u_\rho$ and $u$ on $\partial B_3$ can be obtained by the same way as that of Lemma~2 in~\cite{CS11b} once we observe that Lemma 1 of \cite{CS11b} still holds in our setting, or more precisely $\phi(x):=((|x|-3)^+)^{\alpha}$ satisfies $\M^+\phi\leq0$ in $B_{3+r}\setminus B_{3}$ for small $\alpha,r>0$, which we omit here.
Hence there exists a modulus of continuity $\omega$ such that
\begin{align*}
|u_\rho(x)-u_\rho(y)|\leq \omega(|x-y|)\quad\mbox{and}\quad|u(x)-u(y)|\leq \omega(|x-y|)
\end{align*}
for $\rho>0$, $x\in \R^n$ and $y\in\partial B_3$. Since $u_\rho(y)=u(y)$ on $\partial B_3$, we have for $x\in B_3\setminus B_r$,
\begin{align*}
|u_\rho(x)-u(x)|\leq |u_\rho(x)-u_\rho(y)|+|u(x)-u(y)|&\leq 2\omega(\mathrm{dist}(x,\partial B_3))\\
&\leq 2\omega(3-r),
\end{align*}
where we take $y\in\partial B_3$ such that $|x-y|=\mathrm{dist}(x,\partial B_3)$.
Hence
\[
\limsup_{\rho\to0}\left(-\inf_{B_3}(u-u_\rho)\right)\leq2\limsup_{\rho\to0}\left(-\inf_{B_3\setminus B_r}(u-u_\rho)\right)\leq4\omega(3-r).
\]
By letting $r\to3$, we have $\limsup_{\rho\to0}(-\inf_{B_3}(u-u_\rho))\leq0.$
On the other hand, we also have $\limsup_{\rho\to0}\sup_{B_3}(u-u_\rho)\leq0,$
repeating a similar argument with $-(u-u_\rho)$.
Hence $u_\rho$ uniformly converge to $u$ in $B_3$ as $\rho\to0$.
Since $u_\rho$ satisfies \eqref{eq1thm41} for $\rho>0$, $D^\sigma u_\rho\rightharpoonup D^\sigma u$ weakly in $L^p(B_2)$ and \eqref{eq1thm41} still holds for $u$.
\end{proof}

%%%%%%%%%%%%%%%%%%%%%%%%%%%%%%%%%%%%%%%%%%%%%%%%%%%%%%%%%%%%%%%%%%%%%%%%%%%%%%%%%%%%%%%%%%%%%%%%%%%%%%%%%%%%%%%%%%%%%%%%%%%%%%%%%%%%%%%%%%%%%%%%%%%%%%%%%%%%%%%%%%%%%%%%%%%%%%%%%%%%%%%%%%%%%%%%%%%%%%%%%%%%%%%%%%%%%%%%%%% section Examples
%%%%%%%%%%%%%%%%%%%%%%%%%%%%%%%%%%%%%%%%%%%%%%%%%%%%%%%%%%%%%%%%%%%%%%%%%%%%%%%%%%%%%%%%%%%%%%%%%%%%%%%%%%%%%%%%%%%%%%%%%%%%%%%%%%%%%%%%%%%%%%%%%%%%%%%%%%%%%%%%%%%%%%%%%%%%%%%%%%%%%%%%%%%%%%%%%%%%%%%%%%%%%%%%%%%%%%%%%%

\section{Examples}\label{S5}

Theorem \ref{ABP} says that when we fix $\sigma\in (0,2)$, the ABP maximum principle in the form
\begin{equation}\label{eq1E}
-\inf_{B_1}u\leq-\inf_{\R^n\setminus B_1}u+C(n,\sigma,p)\|(\M^-u)^+\|_{L^p(B_1)}
\end{equation}
holds for $p>\max\{n/\sigma,p^*\}$.
% In this section, we provide some examples that show \eqref{eq1E} fails for some $p\in(n/\sigma, p^*]$, by following~\cite{AIM,Pucci}.
In this section, we provide an example that only $p>n/\sigma$ is not enough to be that \eqref{eq1E} holds, by following~\cite{AIM,Pucci}.

Now, we fix $p_0$ such that
\[
p_0=\frac{n(\sigma+1)}{n+\sigma}.
\]
and assume $p_0>n/\sigma$, whence $n=2,3$ and $\sigma\in(\sqrt{n},2)$.
In order to see our assertion, we must construct a sequence $\{u_N\}_{N=1}^\infty$ such that
\[
\frac{-\inf_{B_1}u_N+\inf_{\R^n\setminus B_1}u_N}{\|(\M^-u_N)^+\|_{L^{p_0}(B_1)}}\to\infty\quad\mbox{as }N\to\infty.
\]
We first choose the Riesz potential $P_N$ of $u_N$. Let $\phi_N(r)\in C^{1,1}([0,\infty))$ be monotonically non-decreasing and $P_N(x):=\phi_N(|x|)$.
Then
\[
D^2P_N(x)=\phi_N''(|x|)\frac{x}{|x|}\otimes\frac{x}{|x|}+\frac{\phi_N'(|x|)}{|x|}\left\{Id-\frac{x}{|x|}\otimes\frac{x}{|x|}\right\}
\]
and
\begin{align}
&\quad\inf_{A\in S_{\lambda,\Lambda}}\Tr(A_\sigma D^2P_N(x))\nonumber\\
&=\inf_{A\in S_{\lambda,\Lambda}}\left\{\frac{\sigma}{\sigma+n}\Tr(A D^2P_N(x))+\frac{1}{\sigma+n}\Tr(A)\Tr(D^2P_N)\right\}\nonumber\\
&=\inf_{A\in S_{\lambda,\Lambda}}\biggl[\left\{\phi_N''(|x|)-\frac{\phi_N'(|x|)}{|x|}\right\}\frac{\sigma}{\sigma+n}\left\langle A \frac{x}{|x|}, \frac{x}{|x|}\right\rangle\nonumber\\
&\quad+\left\{\frac{1}{\sigma+n}\phi_N''(|x|)+\frac{\sigma+n-1}{\sigma+n}\frac{\phi_N'(|x|)}{|x|}\right\}\Tr(A)\biggr]\nonumber\\
&=\inf_{A\in S_{\lambda,\Lambda}}\biggl[\left\{\frac{\sigma+1}{\sigma+n}\phi_N''(|x|)+\frac{n-1}{\sigma+n}\frac{\phi_N'(|x|)}{|x|}\right\}\left\langle A \frac{x}{|x|}, \frac{x}{|x|}\right\rangle\nonumber\\
&\quad+\left\{\frac{1}{\sigma+n}\phi_N''(|x|)+\frac{\sigma+n-1}{\sigma+n}\frac{\phi_N'(|x|)}{|x|}\right\}\Tr \left(A\left(Id-\frac{x}{|x|}\otimes\frac{x}{|x|}\right)\right)\biggr].\label{eq0E}
\end{align}
Set
\[
\tau:=\frac{\sigma+1}{\sigma+n}.
\]
By choosing
\[
A=\lambda(n+\sigma)\frac{x}{|x|}\otimes \frac{x}{|x|}\in S_{\lambda,\Lambda},
\]
we see
\begin{equation}\label{eq2E}
\inf_{A\in S_{\lambda,\Lambda}}\Tr(A_\sigma D^2P_N(x))\leq \lambda(n+\sigma)\left\{\tau\phi_N''(|x|)+(1-\tau)\frac{\phi_N'(|x|)}{|x|}\right\}.
\end{equation}
%\[
%\tau:=\frac{\sigma+1}{\sigma+n}.
%\]
Noting that $2\tau-1>0$, we define
\begin{equation*}
\phi_N(r)=
\left\{
\begin{aligned}
&0\quad\mbox{if }r\geq1,\\
&-\frac{(1-r)^2\log(1-\tau) N}{2\tau^2(1-\tau)^{(1-\tau)/\tau}}% \left(3^{\frac{\sigma+2-n}{\sigma+1}}-r^{\frac{\sigma+2-n}{\sigma+1}}\right)
\quad\mbox{if }1-\tau\leq r< 1,\\
&-\frac{1}{2\tau-1}\biggl\{\left(\tau(1-\tau)^{-(1-\tau)/\tau}\log(1-\tau) N-r^{(2\tau-1)/\tau}\log rN\right)\\
&\left.-\frac{\tau}{2\tau-1}\left((1-\tau)^{(2\tau-1)/\tau}-r^{(2\tau-1)/\tau}\right)\right\}\quad\mbox{if }1/N\leq r<(1-\tau),\\
&\phi_N(1/N)\quad\mbox{if }0\leq r<1/N.
\end{aligned}
\right.
\end{equation*}
%\[
%\phi_N(r):=-\frac{\lambda(\sigma+n)}{2\tau^2}(1-r)^2(1-\tau)^{-\frac{1-\tau}{\tau}}\log(1-\tau) N\quad\mbox{for }1-\tau\leq r\leq 1,
%\]
%\begin{align*}
%\phi_N(r):=
%&\phi_N(1-\tau)-\frac{\lambda(n+\sigma)}{2\tau-1}\left\{\left((1-\tau)^{\frac{2\tau-1}{\tau}}\log(1-\tau) N-r^{\frac{2\tau-1}{\tau}}\log rN\right)\right.\\
%&\quad-\frac{\tau}{2\tau-1}\left.\left((1-\tau)^{\frac{2\tau-1}{\tau}}-r^{\frac{2\tau-1}{\tau}}\right)\right\}\quad\mbox{for }1/N\leq r<(1-\tau),
%\end{align*}
%and $\phi_N(r);=\phi_N(1/N)$ for $0\leq r<1/N$.
Observe that $\phi_N\in C^{1,1}_0([0,\infty))$,
\[
\phi_N'(r)=
\left\{
\begin{alignedat}{3}
&\frac{(1-r)\log(1-\tau) N}{\tau^2(1-\tau)^{(1-\tau)/\tau}}& &\mbox{ for }1-\tau\leq r< 1,\\
&\frac{\log rN}{\tau r^{(1-\tau)/\tau}}&&\mbox{ for }1/N\leq r< 1-\tau,\\
&0& &\mbox{ for }0\leq r< 1/N,r\geq1
\end{alignedat}
\right.
\]
and
\[
\phi_N''(r)=
\left\{
\begin{alignedat}{3}
&-\frac{\log(1-\tau) N}{\tau^2(1-\tau)^{(1-\tau)/\tau}}& &\mbox{ for }1-\tau\leq r< 1,\\
&-\frac{1}{\tau r^{1/\tau}}\left(\frac{1-\tau}{\tau}\log rN-1\right)&&\mbox{ for }1/N\leq r< 1-\tau,\\
&0& &\mbox{ for }0\leq r<1/N,r\geq1.
\end{alignedat}
\right.
\]
Set
\[
u_N(x):=\A(n,-(2-\sigma))\int_{\R^n}\left(P_N(x)-P_N(x+y)\right)\frac{dy}{|y|^{n+(2-\sigma)}},
\]
where $A(n,-(2-\sigma))$ is as in \eqref{-A} with $\sigma$ replaced by $2-\sigma$.
Notice that $u_N\in C^{\sigma}(\R^n)$ according to $P_N\in C^{1,1}_0(\R^n)$ and local H\"older estimates of Proposition 2.5 in~\cite{Sil}.
Since $P_N=0$ in $\R^n\setminus B_1$ and $P_N$ is non-positive in $\R^n$, we see that
\begin{equation}\label{eq3E}
\inf_{\R^n\setminus B_1}u_N\geq0.
\end{equation}
Noting $\tau\in(0,3/4)$, we find a bound of $-\inf_{B_1}u_N$ from below as follows:
\begin{align}
-\inf_{B_1}u_N\geq -u_N(0)
&=\A(n,-(2-\sigma))\int_{\R^n}(P_N(y)-P_N(0))|y|^{-n-(2-\sigma)}dsdy\nonumber\\
&=\A(n,-(2-\sigma))\int_{\R^n}\int_0^1DP_N(sy)\cdot y|y|^{-n-(2-\sigma)}dsdy\nonumber\\
&=\A(n,-(2-\sigma))\int_{\R^n}\int_0^1\phi_N'(s|y|)|y|^{-n+\sigma-1}dsdy\nonumber\\
&\geq\A(n,-(2-\sigma))\int_{\R^n\setminus B_{1/2}}\int_{1/2}^1\phi_N'(s|y|)|y|^{-n+\sigma-1}dsdy\nonumber\\
&\geq\A(n,-(2-\sigma))\int_{\R^n\setminus B_{1/2}}\int_{1/2}^1\log(N/4)\psi(s|y|)|y|^{-n+\sigma-1}dsdy,\nonumber
% &\geq c\log N/4 \label{eq4E}
\end{align}
where we have set
\[
\psi(r):=\min\left\{\frac{1-r}{\tau^2(1-\tau)^{(1-\tau)/\tau}},\frac{1}{\tau r^{(1-\tau)/\tau}}\right\}\quad\mbox{for } 0<r\leq1
\]
and $\psi=0$ for $r>1$.
Since $\psi(r)$ is decreasing,
\begin{align}
-\inf_{B_1}u_N
&\geq\A(n,-(2-\sigma))\int_{\R^n\setminus B_{1/2}}\int_{1/2}^1\log(N/4)\psi(|y|)|y|^{-n+\sigma-1}dsdy\nonumber\\
&=\A(n,-(2-\sigma))\frac{\log (N/4)}{2}\int_{B_1\setminus B_{1/2}}\psi(|y|)|y|^{-n+\sigma-1}dsdy\nonumber\\
&= c\log N/4. \label{eq4E}
\end{align}
On the other hand, Proposition \ref{Hessian} shows
\[
\M^-u_N(x)=\inf_{A\in S_{\lambda,\Lambda}}\Tr(A_\sigma D^2P_N(x)).
\]
For $1-\tau< |x|<1$, we have $\tau\phi_N''+(1-\tau)\phi_N'/|x|\leq0$, which together with \eqref{eq2E} implies
\[
\M^-u_N(x)=\inf_{A\in S_{\lambda,\Lambda}}\Tr(A_\sigma D^2P_N(x))\leq0.
\]
Similarly, for $0<|x|<1-\tau$,
\[
\M^-u_N(x)=\inf_{A\in S_{\lambda,\Lambda}}\Tr(A_\sigma D^2P_N(x))\leq\frac{\lambda(n+\sigma)}{|x|^{1/\tau}}\chi_{\{1/N<|x|<1-\tau\}}(x).
\]
Recall $p_0=n\tau$. Then, the above inequalities imply
\begin{equation}\label{eq5E}
\|(\M^-u_N)^+\|_{L^{p_0}(B_1)}\leq\lambda(n+\sigma)\left(|\partial B_1|\log(1-\tau)N\right)^{1/p_0}.
\end{equation}
Combine \eqref{eq3E}, \eqref{eq4E} and \eqref{eq5E}, to see
\[
\frac{-\inf_{B_1}u_N+\inf_{\R^n\setminus B_1}u_N}{\|(\M^-u_N)^+\|_{L^{p_0}(B_1)}}
\geq\frac{c\log N/4}{\lambda(n+\sigma)\left(|\partial B_1|\log(1-\tau)N\right)^{1/p_0}}
\to\infty\quad\mbox{as }N\to\infty.
\]
Hence \eqref{eq1E} fails at $p_0$.

In addition to the above, we can observe that the $W^{\sigma,p_0}$~estimate of form
\begin{equation}\label{eq6E}
\|D^\sigma u\|_{L^{p_0}(B_{1/2})}\leq C(n,\sigma)(\|\M^-u\|_{L^{p_0}(B_1)}+\|u\|_{L^\infty(\R^n)})
\end{equation}
also fails.
To see this assertion, it suffice to prove
\begin{align}
\|\M^-u_N\|_{L^{p_0}(B_1)}+\|u_N\|_{L^\infty(\R^n)}&\leq C\log (1-\tau)N\quad\mbox{and}\label{eq7E}\\
\|D^\sigma u_N\|_{L^{p_0}(B_{1/2})}&\geq c(\log (1-\tau)N)^{1+1/p_0}\label{eq8E}
\end{align}
for some $c,C>0$.
In this way, we will show \eqref{eq6E} fails from
\[
\frac{\|D^\sigma u_N\|_{L^{p_0}(B_{1/2})}}{\|\M^-u_N\|_{L^{p_0}(B_1)}+\|u_N\|_{L^\infty(\R^n)}}\geq c(\log (1-\tau)N)^{1/p_0}\to\infty\quad\mbox{as }N\to\infty.
\]

First, we confirm \eqref{eq7E} holds.
Note that we have $\M^-u_N(x)>0$ for $x\in B_{1-\tau}$ by applying \eqref{eq0E} and
\[
0<\tau\phi''_N(r)+(1-\tau)\phi'_N(r)\leq\frac{1}{\sigma+n}\phi_N''(r)+\frac{\sigma+n-1}{\sigma+n}\phi_N'(r) \quad\mbox{for }0<r\leq1-\tau.
\]
% and \eqref{eq0E}.
Hence, together with \eqref{eq5E}, we have
\[
\|\M^-u_N\|_{L^{p_0}(B_{1-\tau})}=\|(\M^-u_N)^+\|_{L^{p_0}(B_{1-\tau})}\leq\lambda(n+\sigma)\left(|\partial B_1|\log(1-\tau)N\right)^{1/p_0}.
\]
On the other hand, for $1-\tau<r\leq1$, we have
\[
|\phi_N'|+|\phi_N''|\leq\frac{2\log(1-\tau)N}{\tau^2(1-\tau)^{(1-\tau)/\tau}}
\]
Hence it follows that
\begin{align*}
\|\M^-u_N\|_{L^{p_0}(B_1\setminus B_{1-\tau})}
&\leq C(\|\phi_N'\|_{L^\infty(B_{B_1\setminus B_{1-\tau}})}+\|\phi_N''\|_{L^\infty(B_{B_1\setminus B_{1-\tau}})})\\
&\leq C\log(1-\tau)N.
\end{align*}
and combining the above two inequalities, we have $\|\M^-u_N\|_{L^{p_0}(B_{1})}\leq C\log(1-\tau)N$.
We next show $\|u_N\|_{L^\infty(\R^n)}\leq C\log (1-\tau)N$.
Since
\[
|DP_N(x)|=|\phi'_N(|x|)|\leq\frac{\log(1-\tau)N}{\tau}\max\left\{\frac{1}{(1-\tau)^{(1-\tau)/\tau}},\frac{1}{|x|^{(1-\tau)/\tau}}\right\}\chi_{B_1}(x)\quad\mbox{for }x\in\R^n,
\]
we have $\|DP_N\|_{L^{p\tau/(1-\tau)}(\R^n)}\leq C\log(1-\tau)N$ for $0<p<n$.
By Morrey's inequality under $n<p\tau/(1-\tau)$, we also obtain $\|P_N\|_{C^{\gamma}(\R^n)}\leq C\log(1-\tau)N$ for $0<\gamma<(2\tau-1)/\tau$.
Note that we can choose $\gamma$ such that $2-\sigma<\gamma<(2\tau-1)/\tau$ for $\sigma\in(\sqrt{n},2)$.
Then, we have
\begin{align*}
|u_N(x)|
&=|\A(n,-(2-\sigma))\int_{\R^n}\left(P_N(x)-P_N(x+y)\right)\frac{dy}{|y|^{n+(2-\sigma)}}|\\
&\leq \A(n,-(2-\sigma))\|P_N\|_{C^{\gamma}(\R^n)}\int_{\R^n}\min\{|y|^{\gamma},1\}\frac{dy}{|y|^{n+(2-\sigma)}}\\
&\leq C\log(1-\tau)N,
\end{align*}
where we have used the fact that $\min\{|y|^{\gamma},1\}/|y|^{n+(2-\sigma)}$ is integrable for $\gamma>2-\sigma$.
Hence \eqref{eq7E} is proved.

Now. it remains to prove \eqref{eq8E}.
It follows that
\begin{align*}
\|D^\sigma u_N\|_{L^{p_0}(B_{1/2})}=\|[D^2P_N]_\sigma\|_{L^{p_0}(B_{1/2})}\geq c\|D^2 P_N\|_{L^{p_0}(B_{1/2})}&\geq c\|\phi_N''(|\cdot|)\|_{L^{p_0}(B_{1/2})}.
% &\geq c(\log N/2)^{1+1/p_0}
\end{align*}
Note that $\tau >1/2$ since $n=2,3$ and $\sigma\in (\sqrt{n},2)$.
Then, We calculate as follow:
\begin{align*}
\int_{B_{1/2}}|\phi_N''(|x|)|^{p_0}dx
&\geq\int_{B_{1-\tau}}|\phi_N''(|x|)|^{p_0}dx\\
&=\frac{c}{\tau^{p_0}}\int_{1/N}^{1-\tau}r^{-1}\left|\frac{1-\tau}{\tau}\log rN-1\right|^{p_0}dr\\
&\geq \frac{c}{\tau^{p_0}}\int_{1/N}^{1-\tau}r^{-1}\left(\frac{1-\tau}{\tau}\log rN-1\right)^{p_0}dr\\
&= \frac{c}{\tau^{p_0-1}(p_0+1)(1-\tau)}\left[\left(\frac{1-\tau}{\tau}\log rN-1\right)^{p_0+1}\right]_{1/N}^{1-\tau}\\
&= \frac{c}{\tau^{p_0-1}(p_0+1)(1-\tau)}\left(\left(\frac{1-\tau}{\tau}\log (1-\tau)N-1\right)^{p_0+1}-(-1)^{p_0+1}\right),
\end{align*}
where $c$ is a constant depending only on $n$. 
Hence by choosing $N$ large enough, we arrive at \eqref{eq8E}.

%%%%%%%%%%%%%%%%%%%%%%%%%%%%%%%%%%%%%%%%%%%%%%%%%%%%%%%%%%%%%%%%%%%%%%%%%%%%%%%%%%%%%%%%%%%%%%%%%%%%%%%%%%%%%%%%%%%%%%%%%%%%%%%%%%%%%%%%%%%%%%%%%%%%%%%%%%%%%%%%%%%%%%%%%%%%%%%%%%%%%%%%%%%%%%%%%%%%%%%%%%%%%%%%%%%%%%%%%%
% section Appendix
%%%%%%%%%%%%%%%%%%%%%%%%%%%%%%%%%%%%%%%%%%%%%%%%%%%%%%%%%%%%%%%%%%%%%%%%%%%%%%%%%%%%%%%%%%%%%%%%%%%%%%%%%%%%%%%%%%%%%%%%%%%%%%%%%%%%%%%%%%%%%%%%%%%%%%%%%%%%%%%%%%%%%%%%%%%%%%%%%%%%%%%%%%%%%%%%%%%%%%%%%%%%%%%%%%%%%%%%%%
\section{Appendix. The proof of Proposition \ref{infconv}}
We first recall some properties of semiconcave functions (see Lemma 3.3 and Lemma 3.15 in~\cite{Jensen}).
\begin{prop}\label{Prop61}
Let $u \in LSC(\R^n)$ and $u_h$ be the inf-convolution of $u$, defined by \eqref{uh}.
Then there exists a function $M\in L^1_{loc}(\R^n,\mathscr{S}^n)$ and a matrix-valued measure $S\in\mathscr{M}(\R^n;\mathscr{S}^n)$ such that
\begin{enumerate}
\item[(a)] $D^2u_h=M+S$ in the sense of distributions.
\item[(b)] $S$ is singular with respect to Lebesgue measure.
\item[(c)] $M(x)\leq h^{-1}Id$ for a.e. $x\in\R^n$.
\item[(d)] for a.e. $x\in\R^n$,
\[
u_h(x+y)=u_h(x)+Du_h(x)\cdot y+\frac{1}{2}\langle M(x)y,y\rangle+o(|y|^2).
\]
\end{enumerate}
Here we have denoted by $\mathscr{S}^n$, the set of real symmetric $n\times n$ matrices.
\end{prop}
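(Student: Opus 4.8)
The plan is to derive the four assertions from the classical regularity theory of concave functions, exploiting that $u_h$ differs from a concave function only by a fixed quadratic. \textbf{Step 1.} Expanding the square in \eqref{uh} gives
\[
u_h(x)-\frac{1}{2h}|x|^2=\inf_{y\in\R^n}\Bigl\{u(y)+\frac{1}{2h}|y|^2-\frac1h\langle x,y\rangle\Bigr\},
\]
so $w:=u_h-\frac{1}{2h}|\cdot|^2$ is a pointwise infimum of affine functions of $x$, hence concave; since $u_h$ is finite, $w$ is a real-valued concave function on $\R^n$, and therefore locally Lipschitz, so $D^2u_h$ makes sense as a distribution.

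\textbf{Step 2.} I would then quote the standard structure theory for a finite concave function $w$ on $\R^n$, which is exactly the content of Lemma 3.3 and Lemma 3.15 of \cite{Jensen} (Alexandrov's theorem together with the Lebesgue decomposition of the Hessian measure): the distributional Hessian $D^2w$ is a matrix-valued Radon measure taking values in the nonpositive symmetric matrices; writing its Lebesgue decomposition as $D^2w=M_w\,dx+S_w$, one has $M_w\in L^1_{loc}(\R^n;\mathscr{S}^n)$ with $M_w\leq O$ a.e., $S_w\in\mathscr{M}(\R^n;\mathscr{S}^n)$ singular with respect to Lebesgue measure, and for a.e. $x$ the pointwise second-order expansion
\[
w(x+y)=w(x)+Dw(x)\cdot y+\tfrac12\langle M_w(x)y,y\rangle+o(|y|^2)
\]
holds with the same density $M_w$.

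\textbf{Step 3.} The remaining step is bookkeeping. Put $M:=M_w+h^{-1}Id\in L^1_{loc}(\R^n;\mathscr{S}^n)$ and $S:=S_w$. Since $u_h=w+\frac{1}{2h}|\cdot|^2$ and $D^2\bigl(\tfrac{1}{2h}|\cdot|^2\bigr)=h^{-1}Id$, we get $D^2u_h=D^2w+h^{-1}Id=M\,dx+S$, which is (a); (b) is immediate because $S=S_w$ is singular; (c) follows from $M_w\leq O$ a.e.; and (d) follows by adding the identity $\tfrac{1}{2h}|x+y|^2=\tfrac{1}{2h}|x|^2+\tfrac1h\langle x,y\rangle+\tfrac{1}{2h}|y|^2$ to the expansion of Step 2 and using $Du_h(x)=Dw(x)+h^{-1}x$ for a.e. $x$.

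I do not anticipate a genuine obstacle, since the whole substance is the classical theory of concave functions, borrowed from \cite{Jensen}; the translation from $w$ to $u_h$ is purely algebraic. The single point deserving a line of care is, in Step 2, the identification of the absolutely continuous part $M_w$ of the Hessian measure with the pointwise Alexandrov Hessian appearing in the expansion — but this coincidence is precisely what the cited lemmas assert.
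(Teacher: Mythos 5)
Your argument is correct and in substance coincides with the paper's treatment: the paper does not prove this proposition but simply recalls it from Lemma 3.3 and Lemma 3.15 of \cite{Jensen}, which is exactly the classical semiconcave/Alexandrov theory you invoke in Step 2. Your explicit reduction of $u_h$ to the concave function $u_h-\frac{1}{2h}|\cdot|^2$ and the quadratic bookkeeping in Steps 1 and 3 are a standard and correct way of packaging the same cited results, so there is no genuine difference in approach.
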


\noindent{\bf Proof of Proposition \ref{infconv}.}
(i) of Proposition \ref{infconv} immediately follows from (d) of Proposition \ref{Prop61}.
In fact, for any $x\in\R^n$ such that $|\delta(u_h,x,y)|\leq |M(x)||y|^2+o(|y|^2)$ holds, $\delta(u_h,x,y)/|y|^{n+\sigma}$ is integrable, and so $Iu_h(x)$ is evaluated classically.

We next prove (ii). Suppose $\phi$ touches $u_h$ from below at $x_0$ in some neighborhood $N$. For any $h>0$, set
\[
v_{h}:=
\begin{cases}
\phi&\mbox{in }\overline{N},\\
u_h&\mbox{in }\R^n\setminus N.
\end{cases}
\]
Then, we have
\[
v_{h}(x)\leq u_h(x)\quad\mbox{for }x\in\R^n\quad\mbox{and}\quad v_{h}(x_0)=u_h(x_0).
\]
Note that $u\in LSC(\R^n)\cap L^\infty(\R^n)$, and hence there exists $x^*\in\R^n$ such that
\[
u_h(x_0)=\inf_{x\in\R^n}\left\{u(x)+\frac{1}{2h}|x-x_0|^2\right\}=u(x^*)+\frac{1}{2h}|x^*-x_0|^2,
\]
since $u(x)+|x-x_0|^2/2\to\infty$ as $|x|\to\infty$.
% By the definition of inf-convolution, there exists $x^*\in\R^n$ such that 
Hence for $x,y\in\R^n$, we have
\[
v_{h}(x)\leq u(y)+\frac{1}{2h}|y-x|^2\quad\mbox{and}\quad v_{h}(x_0)=u(x^*)+\frac{1}{2h}|x^*-x_0|^2.
\]
Fix $x=y+x_0-x^*$, to see
\[
v_h(y+x_0-x^*)-\frac{1}{2h}|x^*-x_0|^2\leq u(y)\quad\mbox{and}\quad v_h(x_0)-\frac{1}{2h}|x^*-x_0|^2=u(x^*),
\]
Hence $v_{h}(y+x_0-x^*)-(1/2h)|x^*-x_0|^2$ touches $u(y)$ from below at $x^*$, which together with the assumption that $u$ is a viscosity supersolution of $Iu=f$ in $\R^n$ implies% and since $u$ is a viscosity supersolution of $Iu=f$ in $\R^n$, it follows
\begin{equation}\label{ineq61}
Iv_{h}(x_0)=I[v_{h}(\cdot+x_0-x^*)](x^*)\leq f(x^*).
\end{equation}
% Noting $|x^*-x_0|^2=2h(u_h(x_0)-u(x^*))\leq 4h\|u\|_\infty$, (ii) is proved.
By the definition of $x^*$, we have
\begin{align*}
&\quad u(x^*)+\frac{1}{2h}|x^*-x_0|^2\leq u(x)+\frac{1}{2h}|x-x_0|^2\\
&\Leftrightarrow |x^*-x_0|^2\leq 2h\left\{u(x)-u(x^*)+\frac{1}{2h}|x-x_0|^2\right\}
\end{align*}
for $x\in\R^n$. Choosing $x=x_0$, we see
\begin{equation*}
|x^*-x_0|^2\leq 2h\{u(x_0)-u(x^*)\}\leq 4h\|u\|_\infty.
\end{equation*}
Hence from this inequality and \eqref{ineq61}, (ii) is proved.

Finally, we check (iii). Obviously, $Iu_{h,\e}(x)\leq C\max\{h^{-1},\|u\|_\infty\}$ follows from semiconcavity of $u_h$.
We shall show $D^\sigma u_{h,\e}(x)\to D^\sigma u_h(x)$, which implies $Iu_{h,\e}(x)\to Iu_h(x)$ for a.e. in $\R^n$ as $\e\to0$.
%Clearly, $Iu_{h,\e}(x)\to Iu_h(x)$ follows from this.
From (a) of Proposition \ref{Prop61}, we may fix $x\in\R^n$ such that
\begin{equation*}
\dashint_{B_l}|D^2u_h(x+y)|dy\to |M(x)|\quad\mbox{as }l\to0,
\end{equation*}
and so
\begin{equation*}
\sup_{0<l<2}\dashint_{B_l}|D^2u_h(x+y)|dy<\infty,
\end{equation*}
where we have used the notation: $\dashint_{B_s}f(y)dy:=|B_s|^{-1}\int_{B_s}f(y)dy$. By Taylor's theorem,
\begin{align*}
\delta(u_{h,\e},x,y)
&=u(x+y)+u(x-y)-2u(x)\\
&=\int_{-1}^1(1-\tau)\langle D^2u_{h,\e}(x+\tau y)y,y\rangle d\tau\\
&=\int_{-1}^1\int_{B_\e}(1-\tau)\langle D^2u_h(x+\tau y-z)y,y\rangle\e^{-n}\psi(\e^{-1}z)dzd\tau.
\end{align*}
For any $s\in(0,1)$, we calculate as follows:
\begin{align*}
&\quad\left|\int_{B_s\setminus B_{s/2}}\delta(u_{h,\e},x,y)\frac{y\otimes y}{|y|^{n+\sigma+2}}dy\right|\\
&\leq C\e^{-n}s^{-n+(2-\sigma)}\int_{-1}^1\int_{B_s}\int_{B_\e}|D^2 u_h(x-z+\tau y)|dzdyd\tau\\
&=Cs^{2-\sigma}\int_{-1}^1\int_{B_{\tau s}}\int_{B_\e}(\e\tau s)^{-n}|D^2 u_h(x-z+y)|dzdyd\tau\\
&\leq Cs^{2-\sigma}\int_{-1}^1\dashint_{B_{\tau s+\e}}|D^2u_h(x+y)|dyd\tau\leq Cs^{2-\sigma}\sup_{0<l<2}\dashint_{B_l}|D^2u_h(x+y)|dy
%&\to Cs^{2-\sigma}\int_0^1\dashint_{B_{\tau s}}|D^2u_h(x+y)|dyd\tau \quad\mbox{as }\e\to0.
\end{align*}
Hence it follows that for any $r>0$,
\begin{align*}
\left|\int_{B_r}\delta(u_{h,\e},x,y)\frac{y\otimes y}{|y|^{n+\sigma+2}}dy\right|&\leq\sum_{k=0}^\infty\left|\int_{B_{r2^{-k}}\setminus B_{r2^{-k-1}}}\delta(u_{h,\e},x,y)\frac{y\otimes y}{|y|^{n+\sigma+2}}dy\right|\\
&\leq \sum_{k=0}^\infty Cr^{2-\sigma}2^{-k(2-\sigma)}\sup_{0<l<2}\dashint_{B_{l}}|D^2u_h(x+y)|dyd\tau\\
&=O(r^{2-\sigma}).
\end{align*}
Consequently,
\begin{align*}
D^\sigma u_{h,\e}(x)
&=\frac{\A(n,-\sigma)}{2}\left(\int_{\R^n\setminus B_r}\delta(u_{h,\e},x,y)\frac{y\otimes y}{|y|^{n+\sigma+2}}dy+\int_{B_r}\delta(u_{h,\e},x,y)\frac{y\otimes y}{|y|^{n+\sigma+2}}dy\right)\\
&\to\frac{\A(n,-\sigma)}{2}\int_{\R^n\setminus B_r}\delta(u_{h},x,y)\frac{y\otimes y}{|y|^{n+\sigma+2}}dy+O(r^{2-\sigma})\quad\mbox{as }\e\to0.
\end{align*}
Then, by letting $r\to0$, we find $D^\sigma u_{h,\e}(x)\to D^\sigma u_h(x)$ as $\e\to0$.$\quad\Box$
%%%%%%%%%%%%%%%%%%%%%%%%%%%%%%%%%%%%%%%%%%%%%%%%%%%%%%%%%%%%%%%%%%%%%%%%%%%%%%%%%%%%%%%%%%%%%%%%%%%%%%%%%%%%%%%%%%%%%%%%%%%%%%%%%%%%%%%%%%%%%%%%

\noindent{\bf Acknowledgments.} I am grateful to Prof. Shigeaki Koike for encouragement and his careful reading of the manuscript and also the anonymous referees for so thoroughly reading the manuscript and for giving constructive comments.
S. Kitano is supported by Grant-in-Aid for JSPS Fellows 21J10020.

\end{document}